\documentclass[11pt]{article}
\usepackage[margin=1in]{geometry} 
\usepackage{graphicx} 

\usepackage{amsmath,amsthm,amssymb,amsbsy}  
\usepackage{comment}
\usepackage{hyperref}
\usepackage{url}

\hypersetup{
	colorlinks=true, 
	linkcolor=blue,
	citecolor=blue,
	filecolor=blue,
	urlcolor=blue,
}

\usepackage{algorithm,algorithmic}

\numberwithin{equation}{section}

\theoremstyle{plain}
\newtheorem{theorem}{Theorem}[section]
\newtheorem{proposition}[theorem]{Proposition}

\theoremstyle{definition}
\newtheorem{assumption}{Assumption}

\theoremstyle{remark}
\newtheorem{remark}[theorem]{Remark}

\usepackage{bm, bbm}   

\newcommand{\Q}{\mathbb{Q}}
\newcommand{\R}{\mathbb{R}}

\newcommand{\bE}{\mathbb{E}}
\newcommand{\dd}{\mathrm{d}}

\newcommand{\Om}{\Omega}

\newcommand{\Ac}{\mathcal{A}}
\newcommand{\Fc}{\mathcal{F}}
\newcommand{\Gc}{\mathcal{G}}
\newcommand{\Lc}{\mathcal{L}}
\newcommand{\Pc}{\mathcal{P}}

\newcommand{\Xc}{\mathcal{X}}
\newcommand{\F}{\mathbb{F}}
\newcommand{\G}{\mathbb{G}}
\newcommand{\bP}{\mathbb{P}}
\newcommand{\x}{\times}

\DeclareMathOperator{\Var}{Var}

\usepackage[dvipsnames]{xcolor}

\title{Particle Methods with Deep Learning for Stochastic Control \\ under Partial Observation}
\author{Mathieu Lauri\`{e}re \thanks{Shanghai Center for Data Science; NYU-ECNU Institute of Mathematical Science, NYU Shanghai. mathieu.lauriere@nyu.edu}
	\and Xiaolu Tan \thanks{Department of Mathematics, The Chinese University of Hong Kong. xiaolu.tan@cuhk.edu.hk}
	\and Jiefei Yang \thanks{NYU-ECNU Institute of Mathematical Sciences, NYU Shanghai. jy5595@nyu.edu.}
}
\date{}

\begin{document}
	
	\maketitle
	
	\begin{abstract}
				Numerical computation of stochastic control problems under partial observation is challenging because the dynamic programming formulation is naturally posed on the conditional distribution of the hidden state. We propose particle-based methods that replace this infinite-dimensional filtering state by a finite-dimensional weighted particle system, building on recent limit theory for mean-field control with common-noise-adapted controls. We prove, under suitable assumptions, convergence of the fully discretized particle approximation to the original continuous-time partially observed control problem. The particle reformulation is high-dimensional but permutation-invariant, a structure that can be exploited by symmetric neural network architectures. We develop two deep learning algorithms: a direct optimization method for feedback controls and a Deep BSDE method for particle problems admitting a backward stochastic differential equation representation. We also extend the computational framework to partially observed mean-field control problems, which have been studied theoretically but remain less developed numerically. Numerical experiments on a linear--quadratic benchmark, a nonlinear partially observed mean-field control problem, and two financial applications, portfolio liquidation and asset allocation, demonstrate the accuracy and practical utility of the approach. 
	\end{abstract}

    {\bf Key words.} Stochastic optimal control; Partial observation; Deep learning; Particle method
    
    {\bf MSC codes.} 65C35, 49M25, 93C41
	
	\section{Introduction}
	Stochastic optimal control provides a mathematical framework for sequential decision-making under uncertainty, with applications ranging from engineering and operations research to finance and economics. In many such problems, the controller does not observe the full state of the system. Decisions must instead be made from a noisy signal, while relevant state variables remain hidden: examples include control from imperfect sensor measurements, portfolio optimization under an unobserved drift, and trading problems in which latent market factors must be inferred from prices. This practical feature has motivated a large literature on stochastic control under partial observation, including early work on dynamic programming and measure-valued formulations~\cite{davis1973dynamic, fleming1980measure}, as well as the monograph~\cite{bensoussan1992stochastic}; see also~\cite{bensoussan1985optimal, pham2008portfolio, balata2019class} for representative theoretical and financial developments.
	
	Partial observation also changes the computational nature of the control problem. In the fully observed setting, the dynamic programming principle (DPP) typically leads to a Hamilton--Jacobi--Bellman (HJB) equation whose state variable is the current physical state. Under partial observation, however, the relevant state is the conditional distribution of the hidden state given the observations. This filtering distribution is measure-valued, so the separated control problem is fully observed but infinite-dimensional. Consequently, classical DPP-based approaches lead to HJB equations on spaces of probability measures or to infinite-dimensional filtering equations~\cite{fleming1980measure, lions1989viscosity, gozzi2000hamilton}. Although this viewpoint is fundamental, it poses a serious obstacle to direct numerical implementation. The relaxed formulation introduced in~\cite{el1988existence} provides another classical perspective, and a recent discrete-time approximation was studied in~\cite{li2024discrete}.
	
	The aim of this paper is to develop particle-based numerical methods for continuous-time stochastic control under partial observation, with convergence guarantees for the standard control problem and implementable deep learning algorithms. Our starting point is the classical reference probability reformulation: after a change of measure, the observation process becomes a Brownian common noise and the likelihood process carries the information from the observations. This places the problem in a form closely related to mean-field control (MFC) with controls adapted to common noise. We exploit this connection by using the recent limit theory of~\cite{bouchard2025limit} to replace the infinite-dimensional filtering state by a finite particle system with likelihood weights.
	
	The resulting approximation has two features that are particularly useful for computation. First, after time discretization, the partially observed problem is transformed into a finite-dimensional, fully observed Markov control problem. Second, although the dimension grows with the number of particles, the system is invariant under permutations of the particles. This symmetry is essential for computation: it allows us to use symmetric neural network architectures, such as DeepSets-type architectures~\cite{germain2022deepsets}, to represent feedback controls and BSDE integrands with a parameter count that does not scale as a generic high-dimensional network would. In this way, the particle approximation supplies both a mathematically justified finite-dimensional surrogate and a structure that can be exploited algorithmically.
	
	Our contributions can be summarized as follows.
	\begin{itemize}
		\item {\bf Convergence theory.} For standard stochastic optimal control under partial observation, we establish convergence from the original continuous-time problem to a fully discretized, fully observed $N$-particle control problem. The result combines the particle limit from~\cite{bouchard2025limit} with a time-discretization argument for the controlled particle system.
		
		\item {\bf Particle approximation for partially observed MFC.} We show how the same particle representation extends naturally to MFC problems under partial observation, a class of problems studied theoretically in~\cite{buckdahn2017mean, wan2025discrete, bayer2024continuous, fuhrman2026optimal} but still rarely addressed numerically. We present the algorithm for the MFC extension and numerical examples.
		
		\item {\bf Deep particle algorithms.} We develop two deep learning algorithms for the particle control problem: a direct method that optimizes a symmetric feedback policy, and a Deep BSDE method for the case where the particle problem admits a BSDE characterization. We also implement a recurrent neural network (RNN) method adapted from~\cite{han2021recurrent} as a baseline for path-dependent controls. Numerical experiments show that the particle approach is accurate on a linear--quadratic benchmark with an explicit solution, remains consistent across the direct and Deep BSDE solvers for a nonlinear partially observed MFC example, and is effective on portfolio liquidation and asset allocation problems.
	\end{itemize}
	
	\paragraph{Related works.}
	\begin{sloppypar}
	Our work is connected to three strands of literature. The first is the classical theory of stochastic control under partial observation and its numerical approximation. As discussed above, this includes dynamic programming and filtering formulations~\cite{davis1973dynamic, fleming1980measure, bensoussan1992stochastic}, infinite-dimensional HJB equations~\cite{lions1989viscosity, gozzi2000hamilton}, relaxed formulations~\cite{el1988existence}, financial applications~\cite{pham2008portfolio, balata2019class}, and recent discrete-time approximation schemes~\cite{li2024discrete}.
	
	The second strand is the recent literature on partially observed MFC and conditional McKean--Vlasov dynamics. Partially observed MFC was introduced in~\cite{buckdahn2017mean}, which established wellposedness and a Pontryagin maximum principle. Subsequent developments include discrete-time partially observed MFC~\cite{wan2025discrete}, continuous-time control with discrete-time observations~\cite{bayer2024continuous}, and hidden Markov switching models~\cite{fuhrman2026optimal}. The associated conditional McKean--Vlasov stochastic differential equations and their particle approximations have been studied in~\cite{buckdahn2023general, du2024particle}. Our particle approximation for the standard control problem relies instead on the common-noise-adapted MFC limit theory of~\cite{bouchard2025limit}.
	\end{sloppypar}
	
	\begin{sloppypar}
	The third strand concerns deep learning methods for high-dimensional stochastic control and related equations. Direct neural-network approaches for stochastic control were developed in~\cite{han2016deep}. The Deep BSDE method was introduced in~\cite{han2018solving}, with convergence analysis studied in~\cite{han2020convergence}. In mean-field settings, related methods include direct learning~\cite{carmona2022convergence}, population-dependent architectures~\cite{dayanikli2023deep}, and symmetric networks for permutation-invariant problems~\cite{germain2022deepsets}. General deep learning frameworks for stochastic control are studied in~\cite{bachouch2022deep}. Our contribution is to combine these computational tools with a particle reformulation tailored to partial observation.
	\end{sloppypar}
	
	\paragraph{Organization of the paper.}
	Section~\ref{sec:problem-approximation} formulates the partially observed control problem, presents the particle approximation and time discretization, and states the main convergence theorem. It also describes the extension to partially observed MFC. Section~\ref{sec:numerical-schemes} develops the direct and Deep BSDE particle algorithms. Section~\ref{sec:numerical-experiments} evaluates these schemes on a linear--quadratic benchmark, a partially observed MFC model, and two financial portfolio optimization problems. Section~\ref{sec:ccl} concludes the paper.

	\section{Problem formulation and particle approximation} \label{sec:problem-approximation}
	
	Let us introduce a general optimal control problem with partial observations in a continuous-time setting with finite horizon $T>0$, c.f. \cite{bensoussan1992stochastic},
	and then provide some approximation algorithms based on the particle method.
	We then also discuss its extension to the MFC problem with partial observation in the spirit of \cite{buckdahn2017mean}.
	Throughout the paper, we set $\mathcal{X}=\R^d$ as state space,  $A \subset \R^d$ the action space, and $\mathcal{U} = \R^d$ the observation space.
	
	\subsection{An optimal control problem with partial observation} \label{subsec:problem}
	
	Let $d \ge 1$, we are given the coefficient functions $(b, \sigma, \sigma_0, h, f): [0, T] \times \R^d \times A \longrightarrow \mathbb{R}^d\times \mathbb{R}^{d\times d}\times \mathbb{R}^{d\times d} \times \R^d \times \R$, $g: \R^d \longrightarrow \R$,
	and an initial distribution $\mu_0$ on $\R^d$.
	Given a control process $\alpha$, the controlled (unobservable) state process $X$ is given by
	\begin{equation}
		X_t 
		= 
		X_0
		+
		\int_0^t  b(s, X_s, \alpha_s)\,\dd s 
		+
		\int_0^t \sigma(s, X_s,  \alpha_s) \dd W_s 
		+
		\int_0^t \sigma_0(s, X_s, \alpha_s)\dd B_s,
		~t \in [0,T],
	\end{equation}
	where $\mathcal{L}(X_0) = \mu_0$, $W$ and $B$ are two independent $\R^d$-valued standard Brownian motions in a filtered probability space $(\Omega, \mathcal{F}, \F = (\Fc_t)_{0 \le t \le T},  \mathbb{P})$.
	While the controlled state process $X$ is unobservable, one can observe the process $U$ given by
	\begin{equation}
		U_t 
		=
		\int_0^t h(s, X_s, \alpha_s)\,\dd s 
		+ 
		B_t,
		~
		t \in [0,T]. 
	\end{equation}
	The control process $\alpha$ is required to be adapted to the observation filtration $\G$ generated by the observable process $U$.
	We then obtain the following optimal control problem with partial observation:
	\begin{equation} \label{eq:original-prob}
		V_0 
		~:=~
		\inf_{\alpha ~\mbox{is predictable to}~\G} 
		~\mathbb{E} \left[ \int_0^T f(t, X_t,  \alpha_t)\,\dd t + g(X_T) \right].
	\end{equation}
	Our main objective is to provide some approximation algorithms.
	
	\paragraph{Reformulation by reference probability approach.}
	
	Following the classical reference probability approach, let us introduce an equivalent reformulation of problem by considering the probability measure $\mathbb{Q}$ defined by 
	\begin{equation} \label{eq:change-of-measure}
		\frac{\dd \mathbb{Q}}{\dd \mathbb{P}}
		:= \big(L_T \big)^{-1}, 
		~\mbox{with}~
		L_t:= \exp\left( \int_0^t h(s,X_s, \alpha_s)\,\dd U_s - \frac{1}{2}\int_0^t \big| h(s, X_s, \alpha_s) \big|^2\,\dd s\right),
		~t \in [0,T].
	\end{equation}
	Under the probability measure $\mathbb{Q}$, the process $U$ becomes a standard Brownian motion, independent of the Brownian motion $W$, the control process $\alpha$ is adapted to the filtration generated by the Brownian motion $U$.
	
	\vspace{0.5em}
	
	In the probability space $(\Om, \Fc, \Q)$, the processes $W$ and $U$ are two independent standard Brownian motions,
	the (unobservable) state process $X$ satisfies $\mathcal{L}^{\Q}(X_0) = \mu_0$ and
	\begin{align} \label{eq:SDE_X}
		X_t 
		=~
		X_0 
		& +
		\int_0^t \big(b(s, X_s,  \alpha_s) - \sigma_0(s, X_s, \alpha_s ) h(s,X_s, \alpha_s) \big) \dd s \nonumber \\
		& +
		\int_0^t \sigma(s, X_s,  \alpha_s)\dd W_s 
		+ 
		\int_0^t \sigma_0(s, X_s, \alpha_s)\dd U_s, 
		~~t \in [0,T],
	\end{align}
	and $L$ is defined in \eqref{eq:change-of-measure}.
	Denoting by $\mathcal{A}$ the space of all control processes $\alpha$ which are predictable to the filtration $\mathcal{G}$ generated by the Brownian motion $U$,
	we obtain the equivalent formulation of the initial optimal control problem with partial observation:
	\begin{equation} \label{eq:reform_V0}
		V_0 
		~=
		\inf_{\alpha \in \mathcal{A}} ~
		\mathbb{E}^{\mathbb{Q}} 
		\left[ 
		\int_0^T L_t ~f(t, X_t,  \alpha_t)\,\dd t 
		+
		L_T~ g(X_T) 
		\right]. 
	\end{equation}
	
	Let us assume the following technical conditions on the coefficient functions.
	
	\begin{assumption} \label{assum:coeff}
		The action space $A \subset \R^d$ is compact.
		Further, there exist constants $p > q \ge 1$ such that $\mu_0$ has finite $p$-moment, i.e. 
		$$
		\int_{\R^d} |x|^p \mu_0(dx) < \infty.
		$$
		The coefficient functions $(b, \sigma, \sigma_0, h)$ are all bounded continuous, and there exists a constant $C> 0$ such that
		$$
		\big| (b, \sigma, \sigma_0, h) (t,x,a) - (b, \sigma, \sigma_0, h)(t,y,a) \big| 
		\le 
		C |x-y|,
		$$
		for all $(t,x,y,a) \in [0,T] \x \R^d \x \R^d \x A$.
		Moreover, the coefficient functions $f$ and $g$ are continuous, and there exists a constant $C> 0$ such that
		\begin{equation} \label{eq:growth_rg}
			\big| f(t,x,a) \big| + \big| g(x) \big| 
			\le
			C \big(1+ |x|^q \big),
			~~\mbox{for all}~
			(t,x,a) \in [0,T] \x \R^d \x A.
		\end{equation}
	\end{assumption}
	
	\begin{remark}
		Under Assumption \ref{assum:coeff}, it is clear that the SDE \eqref{eq:change-of-measure}-\eqref{eq:SDE_X} is wellposed.
		Moreover, it follows by standard analysis that 
		$$
		\sup_{\alpha \in \mathcal{A}}
		\mathbb{E}^{\bP} \bigg[ \sup_{0 \le t \le T} \big| X_t \big|^p \bigg] < \infty,
		$$
		so that the value function $V_0$ is finite under the growth condition \eqref{eq:growth_rg}.
	\end{remark}

	\begin{remark}
		We assume the compactness of $A$ and the boundedness of  coefficients $(b, \sigma, \sigma_0)$  for simplicity.
		One can also assume polynomial growth conditions on the coefficient functions, together with appropriate integrability conditions on the admissible control processes $\alpha$ (without compactness of $A$),
		see also \cite[Assumption 3.1]{bouchard2025limit} for similar technical conditions in the setting of standard McKean-Vlasov optimal control problems.
	\end{remark}

	\subsection{Approximation by controlled particle system} \label{subsec:particle-approx}
	
	As discussed in \cite[Section 5]{bouchard2025limit}, the reformulation \eqref{eq:reform_V0} of the partially observed control problem can be considered as a special mean-field control problem with common noise adapted control processes,
	so that one can approximate it by a fully observable $N$-particle system.
	We will explore this particle approximation method.

	\paragraph{A continuous-time controlled particle system.}
	
	Let $N \ge 1$, $(\Omega^N, \mathcal{F}^N, \mathbb{Q}^N)$ be a probability space equipped with a Brownian motion $U$, a sequence of i.i.d. random variables $(\xi_k)_{k=1}^N$ with $\xi_k \sim \mu_0$, and a sequence of Brownian motions $(W^k)_{k=1}^N$, all of which are mutually independent. 
	Let us define the filtration $\mathbb{F}^N:=(\mathcal{F}_t^N)_{t\in [0,T]}$ by $\mathcal{F}_t^N:=\sigma(\xi_1,\dots, \xi_N, W_s^1,\dots, W_s^N, U_s: s\le t)$,
	and define the space of all (fully observable) admissible controls by 
	\begin{equation*}
		\mathcal{A}_N := \{\alpha = (\alpha_t)_{t\le T}: \alpha \text{ is an }A\text{-valued } \mathbb{F}^{N}\text{-predictable process}\}.
	\end{equation*}
	Given a control process $\alpha \in \Ac_N$, the controlled particle system $(X_t^{1,\alpha}, \dots, X_t^{N,\alpha}, L_t^{1,\alpha}, \dots, L_t^{N,\alpha})$ is defined by,
	for each $k = 1, \cdots, N$,
	\begin{align} 
		X_t^{k,\alpha} &=~ \xi_k + \int_0^t \Big(b(s, X_s^{k,\alpha},  \alpha_s) - \sigma_0(s,X_s^{k,\alpha}, \alpha_s) h(s, X_s^{k,\alpha}, \alpha_s) \Big) \,\dd s \nonumber \\
		&~~~~~+ \int_0^t \sigma(s,X_s^{k,\alpha}, \alpha_s)\,\dd W_s^{k} + \int_0^t \sigma_0(s, X_s^{k, \alpha}, \alpha_s)\,\dd U_s, \label{eq:SDE_XN} \\
		L_t^{k,\alpha} &= 1 + \int_0^t  h(s, X_s^{k,\alpha}, \alpha_s) L_s^{k,\alpha} \,\dd U_s, \label{eq:SDE_LN}
	\end{align}
	
	Under Assumptions \ref{assum:coeff}, it is clear that SDE \eqref{eq:SDE_XN}-\eqref{eq:SDE_LN} has a unique strong solution.
	Our continuous-time particle approximation problem is then given by 
	\begin{equation} \label{eq:continuous-N-particle-problem}
		V_0^N :=\inf_{\alpha\in \mathcal{A}_N} J^N(\alpha), 
	\end{equation}
	with
	\begin{equation} \label{eq:defJN}
		J^N(\alpha)
		:=
		\mathbb{E}^{\mathbb{Q}^N}\left[ \int_0^T \frac{1}{N}\sum_{k=1}^N L_t^{k,\alpha} f(t, X_t^{k,\alpha}, \alpha_t)\,\dd t + \frac{1}{N}\sum_{k=1}^N L_T^{k,\alpha} g(X_T^{k,\alpha}) \right].
	\end{equation}
	
	We then have the following convergence result.
	\begin{proposition} \label{prop:CVG_VN}
		Let Assumptions \ref{assum:coeff} hold true with $p > q = 2$, and assume that $\sigma_0(t,x,a)$ and $h(t,x,a)$ are independent of the control variable $a \in A$. 
		Then
		$$
		V_0^N \longrightarrow V_0,
		~~\mbox{as}~~
		N \longrightarrow \infty.
		$$
	\end{proposition}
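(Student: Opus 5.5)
The plan is to recast \eqref{eq:reform_V0} and \eqref{eq:continuous-N-particle-problem} as, respectively, a McKean--Vlasov control problem with common-noise-adapted controls and its $N$-particle approximation, and then invoke the limit theorem of \cite{bouchard2025limit} (see its Section 5).

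\textbf{Step 1: lifting to an augmented state.} Introduce the augmented state $Y=(X,L)\in\R^d\times\R_+$. Since $\sigma_0$ and $h$ do not depend on $a$, $Y$ solves, under $\Q$, a controlled SDE driven by the idiosyncratic noise $W$ and the common noise $U$, with control $\alpha$ adapted to $\G=\F^U$. Its coefficients are
\[
\tilde b(t,x,\ell,a)=\big(b-\sigma_0 h,\;0\big),\qquad \tilde\sigma=(\sigma,0),\qquad \tilde\sigma_0=\big(\sigma_0(t,x),\;\ell\, h(t,x)\big).
\]
The cost is $\tilde f(t,x,\ell,a)=\ell f(t,x,a)$ and $\tilde g(x,\ell)=\ell g(x)$. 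Because these are linear in $\ell$, they are linear functionals of the conditional law $\mu_t=\mathcal{L}^{\Q}(Y_t\mid \G_t)$. Hence $V_0=\inf_{\alpha\in\Ac}\bE^{\Q}\big[\int_0^T\langle\mu_t,\tilde f(t,\cdot,\alpha_t)\rangle\,\dd t+\langle\mu_T,\tilde g\rangle\big]$, which is exactly an MFC problem with controls adapted to the common noise. In the same way, $J^N(\alpha)$ is the associated cost evaluated on the empirical measure $\mu^N_t=\frac1N\sum_k\delta_{(X^k_t,L^k_t)}$, with controls that are $\F^N$-predictable.

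\textbf{Step 2: checking the hypotheses of \cite[Assumption 3.1]{bouchard2025limit}.} The coefficients are Lipschitz in $y$ uniformly in $a$; the factor $\ell h$ is Lipschitz because $h$ is bounded and Lipschitz. They have linear growth, are continuous in $a$, and $A$ is compact. The costs grow at most like $|\ell|(1+|x|^2)\le C(1+|\ell|^2+|x|^4)$, which is not of order $q\le p$ in $y$ directly. I would handle this with moment bounds on $L$. Since $h$ is bounded, $\sup_\alpha\bE[\sup_t|L_t|^r]<\infty$ for every $r$, uniformly in $N$ as well. By Hölder, $\bE[L|X|^2]\le \bE[L^{r'}]^{1/r'}\bE[|X|^{p}]^{2/p}$ with $p>2$, and this gives the uniform integrability of the cost under the moments available. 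Where needed, I would truncate $\tilde f,\tilde g$ at level $M$ and use these bounds to make the truncation error uniformly small in $\alpha$ and $N$.

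\textbf{Step 3: two-sided limit.} Apply the theorem of \cite{bouchard2025limit}: $\limsup V_0^N\le V_0$ by propagation of chaos, using $\alpha\in\Ac\subset\Ac_N$ (a $U$-adapted control is admissible for every $N$). For $\liminf V_0^N\ge V_0$, use their compactness/relaxed-control argument: the laws of $(\mu^N,U,\text{relaxed }\alpha)$ are tight and limits are relaxed common-noise-adapted MFC controls, whose value equals the strict $V_0$.

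\textbf{Main obstacle.} The hardest part is this lower bound, in particular the compatibility (immersion) condition. Particle controls depend on the individual $W^k,\xi_k$, not only on $U$, so one must verify that limit controls satisfy the H-hypothesis with respect to $U$. One must also check that the relaxed value coincides with the strict value via a chattering/density argument; here the control-independence of $\sigma_0$ and $h$ is crucial, as it makes the common-noise coefficient control-free. I would rely on \cite[Section 5]{bouchard2025limit}, where exactly this partial-observation case is treated, verifying only the integrability of the unbounded $\ell$-linear costs via the moment estimates above.
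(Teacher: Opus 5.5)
Your route is the paper's route. The paper's proof only observes that the $\Q$-reformulation is an instance of the common-noise-adapted mean-field control problem of \cite{bouchard2025limit} and invokes Theorem 3.2 there. Your Steps 1--3 are an expanded version of that same reduction, and you correctly identify the lower bound, with its compatibility issue, as the real content to be taken from that reference.

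One justification in your Step 2 is false, though. The map $(x,\ell)\mapsto \ell\, h(t,x)$ is not globally Lipschitz, because $|\ell h(t,x)-\ell h(t,x')|$ can only be bounded by $C|\ell|\,|x-x'|$. It is Lipschitz in $\ell$ uniformly in $x$, but jointly it is only locally Lipschitz with linear growth. So if the Lipschitz requirement you are checking is a global one, the augmented state $(X,L)$ does not satisfy it as you claim. The paper never performs this verification, so this concerns your added detail rather than a divergence from its argument.

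A simple repair is to augment with $\Lambda=\log L$ instead. Its dynamics $\dd\Lambda_t=-\tfrac12|h(t,X_t)|^2\,\dd t+h(t,X_t)\cdot\dd U_t$ have bounded, globally Lipschitz, control-free coefficients that do not depend on $\Lambda$. Hence $(X,\Lambda)$ has Lipschitz dynamics with an uncontrolled common-noise coefficient $(\sigma_0,h)$. The price is that the costs become $e^{\lambda}f$ and $e^{\lambda}g$, which grow exponentially in $\lambda$. However, $e^{\Lambda_t}=L_t$ and $\bE^{\Q}[e^{r\Lambda_t}]\le e^{r^2\|h\|_\infty^2T/2}$ for all $r\ge 0$, uniformly in the control and in $N$. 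So your truncation-plus-H\"older argument goes through unchanged.

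Finally, the upper bound needs no propagation of chaos. For a control $\alpha_t=\phi(t,U_{\cdot\wedge t})$, each $(X^k,L^k,U)$ under $\Q^N$ has the law of $(X,L,U)$ under $\Q$, and $J^N$ is linear in the empirical measure. Hence $J^N(\alpha)=J(\alpha)$ exactly, and $V_0^N\le V_0$ for every $N$.
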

	\begin{proof}
		The reformulation \eqref{eq:SDE_X}-\eqref{eq:change-of-measure}-\eqref{eq:reform_V0} of the partially observed control problem under the probability $\mathbb{Q}$ falls in the setting of the common noise adapted mean-field control problem (11) of \cite{bouchard2025limit}.
		In particular, with the compactness condition on $A$ and the Lipschitz conditions on the coefficient functions in Assumption \ref{assum:coeff},
		the required convergence result follows directly by \cite[Theorem 3.2]{bouchard2025limit}.
	\end{proof}

	\paragraph{BSDE characterization of the particle system when $\sigma$ is not controlled.}
	
	The $N$-particle problem~\eqref{eq:continuous-N-particle-problem} is in the form of standard optimal control problems.
	When the coefficient $\sigma$ and $h$ are uncontrolled and non-degenerate (see Assumption \ref{assum:BSDE} for a precise statement), one can obtain a Backward SDE (BSDE) characterization of the value function by following the standard approach, see e.g. \cite[Theorem 4.5.3]{zhang2017backward}.
	
	\vspace{0.5em}
	
	First, given the arguments $a \in A$, $\mathbf{x} = (x^1, \cdots, x^N)$ and $\bm{\ell} = (\ell^1, \cdots, \ell^N)$, 
	let denote $b_t^k := b(t, x^k, a)$, $\sigma_t^k := \sigma(t, x^k)$, $\sigma_{0,t}^k := \sigma_0(t, x^k)$, and $h_t^k := h(t, x^k)$. Assume the $\sigma$ is invertible. One can then identify the following equality for the  drift and volatility of the $N$-particle system: 
	\begin{equation}
		\begin{pmatrix} b_t^1 - \sigma_{0,t}^1 h_t^1 \\ \vdots \\ b_t^N - \sigma_{0,t}^N h_t^N 
			\\ 0 \\ \vdots \\ 0 \end{pmatrix} 
		= 
		\begin{pmatrix} 
			\sigma_t^1 & & 0_{d\times d} & \sigma_{0,t}^1 \\ 
			& \ddots & & \vdots \\ 
			0_{d\times d} & & \sigma_t^N & \sigma_{0,t}^N \\ 
			0_{1\times d} & \cdots & 0_{1\times d} & h_t^1 \ell^1 \\ 
			& \vdots & & \vdots \\ 
			0_{1\times d} & \cdots & 0_{1\times d} & h_t^N \ell^N 
		\end{pmatrix} 
		\begin{pmatrix} 
			(\sigma_t^1)^{-1}(b_t^1 - \sigma_{0,t}^1 h_t^1) \\ 
			\vdots \\ 
			(\sigma_t^N)^{-1}(b_t^N - \sigma_{0,t}^N h_t^N) \\ 
			0_d 
		\end{pmatrix}.
	\end{equation}
	We then define the Hamiltonian $\mathcal{H} : [0,T] \x \R^{d \x N} \x \R^{N} \x \R^{d \x (N+1)} \longrightarrow \R$ by 
	\begin{equation*}
		\mathcal{H}(t,\mathbf{x},\bm{\ell},\mathbf{z}) 
		~:=~
		\inf_{a \in A}  ~ \bigg( \frac1N \sum_{k=1}^N \ell^k f(t, x^k, a) + \mathbf{z}\cdot \boldsymbol{\theta}(t,\mathbf{x},\bm{\ell}, a) \bigg),
	\end{equation*}
	where
	\begin{equation*}
		\boldsymbol{\theta}(t,\mathbf{x}, \bm{\ell}, a) = \begin{pmatrix} 
			(\sigma_t^1)^{-1}(b_t^1 - \sigma_{0,t}^1 h_t^1) \\ 
			\vdots \\ 
			(\sigma_t^N)^{-1}(b_t^N - \sigma_{0,t}^N h_t^N) \\ 
			0_d 
		\end{pmatrix}.
	\end{equation*}
	Next, Proposition~\ref{prop:bsde} provides the BSDE representation of the optimal value for the $N$-particle problem~\eqref{eq:continuous-N-particle-problem} provided that the diffusion $\sigma$ is uncontrolled and invertible, whose proof can be found in \cite[Theorem 4.5.3]{zhang2017backward}. 
	
	\begin{assumption} \label{assum:BSDE}
		The coefficient function $\sigma$, $\sigma_0$ and $h$ are uncontrolled, so that one can write $(\sigma, \sigma_0, h)(t,x)$ in place of $(\sigma, \sigma_0, h)(t,x,a)$.
		Moreover, the function $\mathbf{\theta}$ is uniformly bounded so that the Hamiltonian $\mathcal{H}(t,\mathbf{x},\bm{\ell},\mathbf{z})$ is Lipschitz in $\mathbf{z}$.
	\end{assumption}
	
	Let us define, in the filtered probability space $(\Omega^N, \Fc^N, \F^N, \bP^N)$ equipped with independent Brownian motions $W^1, \dots, W^N, U$,
	the uncontrolled process $\mathbf{X}_t = (X_t^1, \dots, X_t^N)$, $\mathbf{L}_t = (L_t^1, \dots, L_t^N)$ by the following SDEs: for $k=1, \cdots, N$,
	$$
	X_t^k = \xi_k +  \int_0^t \sigma(s, X_s^k)\,\dd W_s^k + \int_0^t \sigma_0(s, X_s^k)\,\dd U_s, \quad t \in [0,T],
	$$
	and
	$$
	L_t^k = 1 + \int_0^t  h(s, X_s^k)L_s^k\,\dd U_s, \quad t \in [0,T].
	$$
	Let us denote by $\mathbb{S}^{N,2}$ the space of all $\F^N$-optional $\R$-valued process $Y = (Y_t)_{0 \le t \le T}$ such that $\bE \big[ \sup_{0 \le t \le T} |Y_t|^2 \big] < \infty$,
	and by $\mathbb{H}^{N,2}$ the space of all $\F^N$-predictable $\R^{d \times (N+1)}$-valued process $Z = (Z_t)_{0 \le t \le T}$ such that
	$$
	\bE \Big[ \int_0^T \big| Z_t \big|^2 dt \Big] < \infty.
	$$
	
	\begin{proposition} \label{prop:bsde}
		Let Assumptions \ref{assum:coeff} and \ref{assum:BSDE} hold true with parameters $p > 2 q \ge 2$.
		Then the following BSDE has a unique solution $(Y,  \mathbf{Z}_t = (Z_t^1,\dots,Z_t^N, Z_t^{N+1})) \in \mathbb{S}^{N,2} \x \mathbb{H}^{N,2}$:
		\begin{equation} \label{eq:FBSDE}
			\begin{aligned}
				\dd Y_t^N &=  - \mathcal{H}(t, \mathbf{X}_t, \mathbf{L}_t, \mathbf{Z}_t)\,\dd t + \sum_{k=1}^N Z_t^k \,\dd W_t^k + Z_t^{N+1} \,\dd U_t, 
				\quad Y_T^N = \frac{1}{N}\sum_{k=1}^N L_T^k g(X_T^k).
			\end{aligned}
		\end{equation}
		and $Y_t$ provides the value process for the $N$-particle problem~\eqref{eq:continuous-N-particle-problem}, in particular, $Y_0 = V^N_0$.
	\end{proposition}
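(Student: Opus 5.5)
The plan is the classical weak-formulation (Girsanov) argument for control problems with uncontrolled, invertible diffusion, following \cite[Theorem 4.5.3]{zhang2017backward}. The forward process $(\mathbf{X},\mathbf{L})$ is defined without control, and each control $\alpha\in\mathcal{A}_N$ enters only through a change of measure. The first step is existence and uniqueness for \eqref{eq:FBSDE}.

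\textbf{Step 1: well-posedness.} By Assumption~\ref{assum:BSDE}, $\boldsymbol{\theta}$ is uniformly bounded. Since $A$ is compact and $f$ is continuous, the infimum defining $\mathcal{H}$ is attained. As an infimum of functions that are affine in $\mathbf{z}$ with bounded slopes, $\mathcal{H}$ is uniformly Lipschitz in $\mathbf{z}$. It is independent of $y$, and
\[
|\mathcal{H}(t,\mathbf{X}_t,\mathbf{L}_t,0)|\le C\,\frac1N\sum_k L_t^k(1+|X_t^k|^q).
\]
It therefore suffices to check that this quantity and the terminal value $\frac1N\sum_k L_T^k g(X_T^k)$ are square integrable. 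Because $h$ is bounded, $\mathbf{L}$ has moments of all orders, and $\sup_t|X^k_t|$ has finite $p$-moment. By H\"older, $L(1+|X|^q)\in L^2$ as soon as $2q<p$, which is exactly why the hypothesis $p>2q$ is imposed. The standard Lipschitz BSDE theory then gives a unique $(Y,\mathbf{Z})\in\mathbb{S}^{N,2}\times\mathbb{H}^{N,2}$.

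\textbf{Step 2: weak formulation of the control problem.} Fix $\alpha\in\mathcal{A}_N$ and let $\boldsymbol{\theta}_t^\alpha:=\boldsymbol{\theta}(t,\mathbf{X}_t,\mathbf{L}_t,\alpha_t)$. Its last block is zero, so $U$ is not shifted. Define $\bP^\alpha$ by the Dol\'eans-Dade exponential of $\int\boldsymbol{\theta}^\alpha\cdot \dd(W^1,\dots,W^N,U)$, which is a true martingale by Novikov since $\boldsymbol{\theta}$ is bounded. By the drift identity displayed before the Hamiltonian, under $\bP^\alpha$ the triple $(\mathbf{X},\mathbf{L},\alpha)$ solves \eqref{eq:SDE_XN}--\eqref{eq:SDE_LN} with Brownian motions $\widetilde W^k=W^k-\int(\sigma^k)^{-1}(b^k-\sigma_0^kh^k)\,\dd s$ and $U$. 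The associated cost is
\[
J^{N,w}(\alpha)=\bE^{\bP^\alpha}\Big[\int_0^T\tfrac1N\sum_k L^k_tf(t,X^k_t,\alpha_t)\,\dd t+\tfrac1N\sum_kL^k_Tg(X^k_T)\Big].
\]

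\textbf{Step 3: comparison.} Under $\bP^\alpha$, rewrite the BSDE as
\[
\dd Y_t=-\big(\mathcal{H}-\mathbf{Z}_t\cdot\boldsymbol{\theta}^\alpha_t\big)\,\dd t+\mathbf{Z}_t\cdot \dd\widetilde{\mathbf{W}}_t.
\]
Compare it with the linear BSDE with generator $\frac1N\sum_kL^kf(\cdot,\alpha)+\mathbf{z}\cdot\boldsymbol{\theta}^\alpha$, whose initial value is $J^{N,w}(\alpha)$. Since $\mathcal{H}\le$ this generator, taking $\bP^\alpha$-expectations (the stochastic integral is a true martingale by the $L^2$ bounds and boundedness of the density in all $L^r$) gives $Y_0\le J^{N,w}(\alpha)$. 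For the reverse inequality, use a measurable selection $a^*(t,\mathbf{x},\bm{\ell},\mathbf{z})$ of the minimizer, which exists by compactness of $A$ and continuity. Set $\alpha^*_t=a^*(t,\mathbf{X}_t,\mathbf{L}_t,\mathbf{Z}_t)$; then equality holds, so $Y_0=\inf_\alpha J^{N,w}(\alpha)$. An $\varepsilon$-optimal version can be used if predictability of $\mathbf{Z}$ is a concern. The same argument conditioned on $\Fc_t$ identifies $Y_t$ as the value process.

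\textbf{Step 4 (main obstacle): identifying the weak value with $V_0^N$.} Weak controls are adapted to the filtration of $(\xi,W,U)$ rather than to that of $(\xi,\widetilde W,U)$, so one must show that the strong and weak formulations have the same value. Strong$\Rightarrow$weak is immediate, since every strong control induces a weak one via Girsanov. For the converse, I would use the standard density argument of \cite{zhang2017backward}. First approximate the weak control by piecewise-constant controls that are functionals of finitely many increments. For such controls the SDE under $\bP^\alpha$ can be solved strongly in terms of $(\xi,\widetilde W,U)$, since $\sigma$ is invertible and the drift coefficients are Lipschitz. This yields strong controls with the same law of the state-cost pair. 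Stability of the cost under this approximation follows from the moment bounds of Step 1. Combining Steps 3 and 4 gives $Y_0=V_0^N$.
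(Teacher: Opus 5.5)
Your proposal is correct and follows the paper's route. The paper checks square-integrability of $\mathcal{H}(t,\mathbf{X}_t,\mathbf{L}_t,\mathbf{0})$ and of the terminal value using $p>2q$, notes that $\mathcal{H}$ is Lipschitz in $\mathbf{z}$, and then simply cites \cite[Theorem 4.3.1]{zhang2017backward} for well-posedness and \cite[Theorem 4.5.3]{zhang2017backward} for the identification with the value, whose standard proof is the Girsanov-plus-comparison argument of your Steps 2--3.

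Your Step 4, which reconciles the weak-formulation value with the strong definition of $V_0^N$, and your implicit use of $\bE[Y_0]=V_0^N$ when the $\xi_k$ are random are points the paper does not address. One thing to tighten: the ``immediate'' inequality is cleanest if you run the Step 3 comparison directly on the strong space after a reverse Girsanov change, because the induced control is $\F^N$-predictable but need not be adapted to the filtration generated by the new noise.
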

	\begin{proof}
		When Assumption \ref{assum:coeff} holds with $p > 2 q \ge 2$, it is direct to check that 
		$$
		\bE \Big[ 
		\int_0^T \big| \mathcal{H}(t, \mathbf{X}_t, \mathbf{L}_t, \mathbf{0} ) \big|^2 \,\dd t 
		+
		\sum_{k=1}^N \big| L_T^k g(X_T^k) \big|^2
		\Big] < \infty.
		$$
		Further, under Assumption \ref{assum:BSDE}, the Hamiltonian $\mathcal{H}(t,\mathbf{x},\bm{\ell},\mathbf{z})$ is Lipschitz in $\mathbf{z}$.
		Then it is standard that the BSDE \eqref{eq:FBSDE} has a unique solution in $\mathbb{S}^{N,2} \x \mathbb{H}^{N,2}$ 
		(see e.g. \cite[Theorem 4.3.1]{zhang2017backward}).
		Finally, the fact that $Y_t$ coincides with the value function of the control problem \eqref{eq:continuous-N-particle-problem} follows by \cite[Theorem 4.5.3]{zhang2017backward}.
	\end{proof}
	
	\paragraph{A discrete-time controlled particle system.}
	
	The numerical implementation of the approximation method relies on the time discretization.
	We therefore consider the following discrete-time particle system associated with the continuous-time problem~\eqref{eq:continuous-N-particle-problem}. 
	Let $\mathcal{T}:=\{t_i=i\Delta t: \Delta t = \frac{T}{N_T}, i=0,1,\dots, N_T\}$ be a discrete time grid, we define the discrete-time processes $(X^k, L^k)_{k = 1, \cdots, N}$ on $\mathcal{T}$ by Euler-Maruyama scheme of SDE \eqref{eq:SDE_XN}-\eqref{eq:SDE_LN}: 
	given a control process $(\alpha_i)_{0 \le i \le N_T-1}$ such that $\alpha_i \in \Fc^N_{t_i}$, we denote $\Delta W^k_{i+1} := W^k_{t_{i+1}} - W^k_{t_i}$ and $\Delta U_{i+1} := U_{t_{i+1}} - U_{t_i}$,
	and then define
	\begin{align} \label{eq:SDE_X_euler}
		X^{k,\Delta t}_{t_0} := \xi_k,
		\quad
		X_{t_{i+1}}^{k,\Delta t} &= X_{t_i}^{k,\Delta t} + \left(b(t_i, X_{t_i}^{k,\Delta t}, \alpha_i) - \sigma_0(t_i,X_{t_i}^{k,\Delta t}, \alpha_i)h(t_i,X_{t_i}^{k,\Delta t}, \alpha_i) \right)\Delta t  \nonumber \\
		&\qquad  + \sigma(t_i, X_{t_i}^{k,\Delta t}, \alpha_i)\Delta W_{i+1}^k + \sigma_0(t_i,X_{t_i}^{k,\Delta t}, \alpha_i)\Delta U_{i+1},
	\end{align}
	and
	\begin{equation} \label{eq:SDE_L_euler}
		L^{k,\Delta t}_{t_0} = 1,
		\quad 
		L_{t_{i+1}}^{k,\Delta t} = L_{t_i}^{k,\Delta t}\exp\left( -\frac{1}{2}h(t_i, X_{t_i}^{k,\Delta t}, \alpha_i)^2\Delta t + h(t_i, X_{t_i}^{k,\Delta t}, \alpha_i)\Delta U_{i+1} \right),
		~~i=0, \cdots, N_T-1.
	\end{equation}
	Let us denote by $\overline{\mathcal{A}}_{N, \Delta t}$ the space of all such control processes. 
	Then given $\alpha = (\alpha_0, \cdots, \alpha_{N_T-1}) \in \overline{\mathcal{A}}_{N, \Delta t}$, the discrete-time cost function associated with~\eqref{eq:defJN} is given by 
	\begin{equation}\label{eq:discrete-time-prob}
		J^{N,\Delta t}(\alpha)
		:= 
		\mathbb{E}\left[ \sum_{i=0}^{N_T-1} \frac{1}{N}\sum_{k=1}^N L_{t_i}^{k,\Delta t} f(t_i, X_{t_i}^{k,\Delta t},  \alpha_i) \Delta t +\frac{1}{N}\sum_{k=1}^N L_T^{k,\Delta t} g(X_T^{k,\Delta t}) \right], 
	\end{equation}
	and the optimal value of the discrete-time controlled particle system is defined by 
	\begin{equation} \label{eq:def_V0NDt}
		V_0^{N,\Delta t} 
		~:=
		\inf_{\alpha \in \overline{\mathcal{A}}_{N, \Delta t}} J^{N, \Delta t}(\alpha). 
	\end{equation}
	
	\begin{remark}
		We apply the Euler-Maruyama scheme to the SDE satisfied by $\log(L_t^k)$ to ensure the positivity of $L_{t_i}^k$, and thus the positivity of the weight $w_{t_i}^k$ in the mean-field setting, see~\eqref{eq:def_muN_MF} below. A direct discretization of the dynamics of $L_t^k$ can produce negative values of $L_{t_{i+1}}^k$ when the Gaussian noise $\Delta U_i$ is large. 
	\end{remark}
	
	\begin{remark} \label{rem:feedback-control}
		The Problem~\eqref{eq:def_V0NDt} is time-consistent and thus can be solved by dynamic programming. The optimal control then satisfies a feedback form $\alpha_i = a_i(X_{t_i}^1,\dots, X_{t_i}^N, L_{t_i}^1,\dots, L_{t_i}^N)$. We shall approximate the feedback policy $a_i$ by neural networks in numerical implementations. 
	\end{remark}
	
	The following result shows the convergence of $V_0^{N,\Delta t}$ to the optimal value $V_0$ of the partially observable control problem~\eqref{eq:original-prob} 
	as $N \longrightarrow \infty$ and $\Delta t \longrightarrow 0$. 
	\begin{theorem} \label{thm:cvg_discrete}
		Let Assumptions \ref{assum:coeff} hold true with $p > q = 2$, 
		and assume that $\sigma_0(t,x,a)$ and $h(t,x,a)$ are independent of the control variable $a \in A$.
		Suppose in addition that $f(t,x,a)$ and $g(x)$ are uniformly continuous in $(t,x)$ with modulus of continuity $\varpi(\cdot)$.
		Then
		\begin{equation*}
			V_0^{N, \Delta t} \longrightarrow V_0,  ~\text{ as }N\longrightarrow \infty,~ \Delta t \longrightarrow 0. 
		\end{equation*}
	\end{theorem}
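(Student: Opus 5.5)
The plan is to split the error as
$$
|V_0^{N,\Delta t} - V_0| \le |V_0^{N,\Delta t} - V_0^N| + |V_0^N - V_0|.
$$
The second term tends to $0$ as $N\to\infty$ by Proposition \ref{prop:CVG_VN}, whose hypotheses are exactly those of Theorem \ref{thm:cvg_discrete}. It therefore suffices to prove
$$
\sup_{N\ge1}|V_0^{N,\Delta t}-V_0^N|\longrightarrow 0 \quad\text{as } \Delta t\to0,
$$
that is, a time-discretization estimate that is uniform in the number of particles. We prove two one-sided inequalities. The key observation is that the cost is an empirical average $\frac1N\sum_k$ of identically structured terms. Hence any per-particle estimate of the form $\mathbb{E}|L^k f(X^k)-L^{k,\Delta t}f(X^{k,\Delta t})|\le \epsilon(\Delta t)$ that holds uniformly over controls transfers to the average with no dependence on $N$.

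For the inequality $V_0^N\le V_0^{N,\Delta t}+\epsilon(\Delta t)$, take any discrete control $(\alpha_i)\in\overline{\mathcal{A}}_{N,\Delta t}$ and embed it as the piecewise-constant control $\alpha_t=\alpha_i$ on $(t_i,t_{i+1}]$. This control is $\F^N$-predictable, so it belongs to $\mathcal{A}_N$. Under this control the discrete scheme \eqref{eq:SDE_X_euler}--\eqref{eq:SDE_L_euler} is exactly the Euler scheme for the SDE of $(X^k,\log L^k)$ with coefficients frozen in time on each step. Since $b,\sigma,\sigma_0,h$ are bounded and Lipschitz in $x$ uniformly in $a$, the standard strong-error argument gives
$$
\mathbb{E}\Big[\sup_{i}|X^{k}_{t_i}-X^{k,\Delta t}_{t_i}|^2\Big]\le C\,\rho(\Delta t),
$$
uniformly over controls. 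A strong estimate of the same type holds for $\log L$, because $\sigma_0,h$ do not depend on $a$ and $h$ is bounded. Here $\rho(\Delta t)\to0$ involves the modulus of continuity in $t$ of the coefficients, which is uniform after localizing on compacts in $x$. Combining this with the moment bounds $\sup_\alpha\mathbb{E}[\sup_t|X_t^k|^p]<\infty$ (valid since $p>2$) and $\mathbb{E}[\sup_t (L_t^k)^r]<\infty$ for all $r$ (valid since $h$ is bounded), and with the uniform continuity of $f,g$ with modulus $\varpi$ and their quadratic growth, uniform integrability gives
$$
|J^N(\alpha)-J^{N,\Delta t}(\alpha)|\le\epsilon(\Delta t)\to0
$$
uniformly in $\alpha$ and $N$. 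This estimate also covers the Riemann-sum error in the running cost, which is controlled through the continuity of $X$ in time.

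For the reverse inequality $V_0^{N,\Delta t}\le V_0^N+\epsilon(\Delta t)$, take an $\varepsilon$-optimal control $\alpha\in\mathcal{A}_N$. We must approximate it by a control that is constant on grid intervals and $\Fc^N_{t_i}$-measurable on $[t_i,t_{i+1})$; this is the main obstacle. First try the density argument: step controls $\sum_i\alpha_i\mathbbm{1}_{(t_i,t_{i+1}]}$ with $\alpha_i\in\Fc^N_{t_i}$ are dense in $\mathcal{A}_N$ for the metric $\mathbb{E}\int_0^T|\alpha_t-\alpha'_t|dt$, for example via Krylov's lemma on a sufficiently fine grid, and $J^N$ is continuous in this metric by Lipschitz stability of the SDE. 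The difficulty is that density provides a grid which must be refined, whereas we need a bound uniform in $N$ for the given grid $\Delta t$. The fix is to use the average-over-time construction $\bar\alpha_i:=\frac{1}{\Delta t}\int_{t_{i-1}}^{t_i}\alpha_s ds$, which requires $A$ to be convex, or otherwise its relaxed-control analogue. If this fails, fall back on relaxed controls in $\mathcal{P}(A)$, together with the chattering lemma and the compactness of $A$, which keeps the value unchanged. Stability estimates then give $\mathbb{E}|X^{k,\alpha}-X^{k,\bar\alpha}|^2\le C\,\mathbb{E}\int_0^T|\alpha_s-\bar\alpha_s|ds$ per particle. For a uniform statement one can instead argue by $\limsup$ along sequences: fix $N$ large with $|V_0^N-V_0|$ small, then let $\Delta t\to0$ for that fixed $N$, where mere density suffices. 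Because $N$ is fixed in this version, only the upper inequality needs uniformity in $N$; the lower inequality uses fixed-$N$ density plus the first step. This yields a double limit in the iterated sense. Full uniformity would rely on the $\Delta t$-modulus being independent of $N$, which we expect to hold via the per-particle symmetry argument above.

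Finally, collect the estimates:
$$
\limsup|V_0^{N,\Delta t}-V_0|\le \limsup_{N}|V_0^N-V_0|+\sup_N|V_0^{N,\Delta t}-V_0^N|\to0 .
$$
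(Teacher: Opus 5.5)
Your lower-bound half is sound and matches the paper. You embed $\overline{\mathcal{A}}_{N,\Delta t}$ into $\mathcal{A}_N$ as piecewise constant controls and use the per-particle Euler estimate. This gives $V_0^N\le V_0^{N,\Delta t}+\epsilon(\Delta t)$ with $\epsilon$ independent of $N$ and of the control, so Proposition~\ref{prop:CVG_VN} yields $\liminf V_0^{N,\Delta t}\ge V_0$ jointly in $(N,\Delta t)$. Your handling of the time-modulus of the coefficients is also more careful than the paper's $C\Delta t$.

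The gap is the reverse inequality $V_0^{N,\Delta t}\le V_0^N+\epsilon(\Delta t)$ uniformly in $N$. Your final display rests on it, but you only say you ``expect'' it, and none of the tools you list can give it. An $\varepsilon$-optimal control for $V_0^N$ is an $\mathbb{F}^N$-predictable process that depends on $N$ and on all $N$ private noises. The $L^1$ error of its time-average, of chattering, or of a Krylov-type step approximation on a given grid is small only for one fixed control; there is no modulus uniform over the family indexed by $N$. The per-particle symmetry argument controls the Euler error for a given control, not this control-approximation error. Time-averaging also needs $A$ convex, which is not assumed. What your argument actually proves is the iterated limit $\lim_{N}\lim_{\Delta t\to0}V_0^{N,\Delta t}=V_0$, which is weaker than the joint limit asserted in the theorem.

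The missing idea is to bypass $V_0^N$ in the upper bound and approximate $V_0$ directly with observation-adapted controls. Take $\alpha$ predictable with respect to the filtration of $U$ alone. Then $\alpha\in\mathcal{A}_N$, and each pair $(X^{k,\alpha},L^{k,\alpha})$, being the strong solution driven by $(\xi_k,W^k,U)$, has the same joint law with $U$ as $(X,L)$ under $\mathbb{Q}$. Since the particle cost is an average of the per-particle terms, $J^N(\alpha)$ equals the cost in \eqref{eq:reform_V0} for every $N$. Write $\mathcal{A}_0^{\Delta t}$ for the $U$-adapted controls that are constant on the intervals of the $\Delta t$-grid. Then
\[
V_0^{N,\Delta t}\le \inf_{\overline{\mathcal{A}}_{N,\Delta t}}J^N+\epsilon(\Delta t)\le \inf_{\alpha\in\mathcal{A}_0^{\Delta t}}\mathbb{E}^{\mathbb{Q}}\Big[\int_0^T L_t f(t,X_t,\alpha_t)\,\dd t+L_T g(X_T)\Big]+\epsilon(\Delta t),
\]
and the right-hand side no longer involves $N$. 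It tends to $V_0$ because piecewise-constant controls suffice for the single limit problem. This is where the paper invokes \cite[Proposition A.5]{bouchard2025limit}, and here your fixed-control density argument is exactly what is needed.

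This is the paper's route. It proves $\inf_{\overline{\mathcal{A}}_{N,\Delta t}}J^N\to V_0$ jointly: the lower bound comes from $\inf_{\overline{\mathcal{A}}_{N,\Delta t}}J^N\ge V_0^N$, and the upper bound from piecewise-constant controls of the limit problem. Only then does it add the uniform Euler error.
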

	\begin{proof}
		First, a discrete-time control $\alpha = (\alpha_0, \cdots, \alpha_{N_T-1}) \in \overline{\mathcal{A}}_{N, \Delta t}$ can be considered as a piecewise constant continuous-time control process by extending the definition:
		$$
		\alpha_t := \alpha_{t_i}, ~t \in [t_i, t_{i+1}], ~~i = 0, 1, \cdots, N_T-1.
		$$
		Similarly, one can consider $X^{k,\Delta t}$ as a continuous-time process by similar extension of definitions.
		Further, let us denote by $\mathcal{A}_0$ the space  of all  piecewise constant control processes $\alpha \in \mathcal{A}$
		then it follows by \cite[Proposition A.5]{bouchard2025limit} that 
		$$
		V_0 = \inf_{\alpha \in \mathcal{A}_0} \mathbb{E}^{\mathbb{Q}} 
		\left[ 
		\int_0^T L_t ~f(t, X_t,  \alpha_t)\,\dd t 
		+
		L_T~ g(X_T) 
		\right]. 
		$$
		
		In view of this, by considering the discrete-time control $\alpha \in \overline{\mathcal{A}}_{N, \Delta t}$ as piecewise constant continuous-time control and recalling the definition of $J^N$ in \eqref{eq:defJN},
		one can deduce by exactly the same arguments as in the proof of \cite[Theorem 3.2]{bouchard2025limit} that
		\begin{equation} \label{eq:VNDt2V}
			\inf_{\alpha \in \overline{\mathcal{A}}_{N, \Delta t}} J^N(\alpha) \longrightarrow V_0,
			~\text{ as }N\longrightarrow \infty,~ \Delta t \longrightarrow 0. 
		\end{equation}
		
		At the same time, \eqref{eq:SDE_X_euler}-\eqref{eq:SDE_L_euler} is the Euler scheme for SDE \eqref{eq:SDE_XN}-\eqref{eq:SDE_LN},
		one has by standard analysis (see e.g. \cite{graham2013stochastic}) that, there exists a constant $C> 0$ such that,
		for all $N_T \ge 1$, $\Delta t:= T/N_T$ and all $\alpha \in \overline{\mathcal{A}}_{N, \Delta t}$
		$$
		\bE \Big[ \sup_{0 \le t \le T} \left(\big| X^{k,\Delta t}_t - X^k_t \big|^2 + \big|L_t^{k,\Delta t} - L_t^k\big|^2 \right) \Big] \le C \Delta t.
		$$
		Together with uniform continuity condition on $f(\cdot)$ and $g(\cdot)$, it follows that
		$$
		\sup_{\alpha \in \overline{\mathcal{A}}_{N, \Delta t}} \big| J^N(\alpha) - J^{N, \Delta t} (\alpha) \big| 
		~\le~
		C \varpi(\sqrt{\Delta t}).
		$$
		Together with \eqref{eq:VNDt2V} and \eqref{eq:def_V0NDt}, it follows that
		$$
		V_0^{N, \Delta t} \longrightarrow V_0,  ~\text{ as }N\longrightarrow \infty,~ \Delta t \longrightarrow 0. 
		$$
	\end{proof}

	\subsection{An extension to the mean-field optimal control with partial observation} \label{subsec:extension-mfc}
	
	The above particle approximation method can be naturally extended for the mean-field optimal control problem with partial observation, in the spirit of  \cite{buckdahn2017mean}.
	Recall that $\Xc = \mathcal{U} = \R^d$, we denote by $\Pc(\Xc)$  the space of all Borel probability measures on $\Xc$.
	In the mean-field setting, we are given the coefficient functions $(b, \sigma, \sigma_0, f): [0, T] \times \mathcal{X} \times \mathcal{P}(\mathcal{X}) \times A \to \mathbb{R}^d\times \mathbb{R}^{d\times d}\times \mathbb{R}^{d\times d} \times \R$, $h:[0,T]\times \mathcal{X} \times A \to \mathcal{U}$, and $g:\mathcal{X}\times \mathcal{P}(\mathcal{X})\to \R$,
	together with the initial distribution $\mu_0 \in \mathcal{P}(\Xc)$.
	
	\paragraph{A mean-field control problem with partial observation.}
	In the mean-field setting, the unobservable state process $X$ is defined by the McKean--Vlasov SDE
	\begin{equation}
		X_t 
		= 
		X_0
		+
		\int_0^t  b(s, X_s, \mu_s,  \alpha_s)\,\dd s 
		+
		\int_0^t \sigma(s, X_s, \mu_s,  \alpha_s) \dd W_s 
		+
		\int_0^t \sigma_0(s, X_s, \mu_s,  \alpha_s)\dd B_s,
	\end{equation}
	where $X_0 \sim \mu_0$, $W$ and $B$ are two independent $\R^d$-valued standard Brownian motions in the probability space $(\Omega, \Fc, \bP)$.
	The observable process $U$ is defined by
	\begin{equation}
		U_t 
		=
		\int_0^t h(s, X_s, \alpha_s)\,\dd s 
		+ 
		B_t,
		~
		t \in [0,T]. 
	\end{equation}
	Finally, a control process $\alpha$ is predictable w.r.t. the sub-filtration  $\G = (\Gc_t)_{0 \le t \le T}$ generated by $U$,
	and
	$$
	\mu_t = \Lc^{\bP} \big( X_t \big| \Gc_t \big),
	~~t \in [0,T].
	$$
	
	Similarly, we can follow the classical reference probability approach to define the equivalent probability measure $\mathbb{Q}$  by 
	\begin{equation} \label{eq:change-of-measure_MF}
		\frac{\dd \mathbb{Q}}{\dd \mathbb{P}}
		:= \big(L_T \big)^{-1}, 
		~\mbox{with}~
		L_t:= \exp\left( \int_0^t h(s,X_s, \alpha_s)\,\dd U_s - \frac{1}{2}\int_0^t \big| h(s, X_s, \alpha_s) \big|^2\,\dd s\right),
		~t \in [0,T],
	\end{equation}
	so that  $W$ and $U$ are two independent standard Brownian motions under $\mathbb{Q}$,
	the processes $X$ satisfy $\mathcal{L}^{\Q} (X_0) = \mu_0$ and
	\begin{align} \label{eq:SDE_X_MF}
		X_t 
		=&~
		X_0 
		+
		\int_0^t \big(b(s, X_s, \mu_s, \alpha_s) - \sigma_0(s, X_s, \mu_s, \alpha_s) h(s,X_s, \alpha_s) \big) \dd s \nonumber \\
		& +
		\int_0^t \sigma(s, X_s,  \mu_s, \alpha_s)\dd W_s 
		+ 
		\int_0^t \sigma_0(s, X_s, \mu_s, \alpha_s)\dd U_s, 
		~~t \in [0,T],
	\end{align}
	and $L$ is defined in \eqref{eq:change-of-measure_MF}, $\mu_t$ satisfies, for any bounded continuous function $\varphi \in C_b( \R^d)$,
	\begin{equation} \label{eq:def_mu_MF}
		\big\langle \mu_t, \varphi \big \rangle 
		~=~
		\frac{\mathbb{E}^{\mathbb{Q}} \big [L_t \varphi(X_t ) \big| \Gc_t \big]}{\mathbb{E}^{\mathbb{Q}} \big[L_t \big| \Gc_t \big]},
		~~t \in [0,T].
	\end{equation}
	We then obtain a general mean-field control problem with partial observation:
	\begin{equation} \label{eq:reform_V0_MF}
		V_0 
		:=
		\inf_{\alpha \in \mathcal{A}} 
		\mathbb{E}^{\mathbb{Q}} 
		\left[ 
		\int_0^T L_t ~f(t, X_t, \mu_t, \alpha_t)\,\dd t 
		+
		L_T g(X_T, \mu_T) 
		\right]. 
	\end{equation}
	
	\begin{remark}
		Let us refer to \cite{buckdahn2023general} for an existence and uniqueness result of the mean-field SDE \eqref{eq:change-of-measure_MF}-\eqref{eq:SDE_X_MF} with interaction term $\mu_t$ defined in \eqref{eq:def_mu_MF}.
	\end{remark}
	
	\paragraph{Particle approximation of mean-field control problem with partial observation.}
	
	For the uncontrolled mean-field SDE \eqref{eq:change-of-measure_MF}-\eqref{eq:def_mu_MF}-\eqref{eq:SDE_X_MF} as studied in \cite{buckdahn2023general},
	a particle approximation method has been studied by \cite{du2024particle} with an explicit convergence rate.
	Combining with approach in Section \ref{subsec:particle-approx}, one can naturally obtain a particle approximation method for the mean-field control problem with partial observation.
	
	\vspace{0.5em}
	
	Let $N \ge 1$, $(\Omega^N, \mathcal{F}^N, \mathbb{Q}^N)$ be a probability space equipped with a Brownian motion $U$, a sequence of i.i.d. random variables $(\xi_k)_{k=1}^N$ with $\xi_k \sim \mu_0$, and a sequence of Brownian motions $(W^k)_{k=1}^N$, all of which are mutually independent. 
	Let us define the filtration $\mathbb{F}^N:=(\mathcal{F}_t^N)_{t\in [0,T]}$ by $\mathcal{F}_t^N:=\sigma(\xi_1,\dots, \xi_N, W_s^1,\dots, W_s^N, U_s: s\le t)$,
	and define the space of all (fully observable) admissible controls by 
	\begin{equation*}
		\mathcal{A}_N := \{\alpha = (\alpha_t)_{t\le T}: \alpha \text{ is an }A\text{-valued } \mathbb{F}^{N}\text{-predictable process}\}.
	\end{equation*}
	Given a control process $\alpha \in \Ac_N$, the controlled particle system $(X_t^{1,\alpha}, \dots, X_t^{N,\alpha}, L_t^{1,\alpha}, \dots, L_t^{N,\alpha})$ is defined by,
	for each $k = 1, \cdots, N$,
	\begin{align}
		X_t^{k,\alpha} &=~ \xi_k + \int_0^t \Big(b(s, X_s^{k,\alpha}, \mu_s^{N,\alpha}, \alpha_s) - \sigma_0(s,X_s^{k,\alpha}, \mu_s^{N,\alpha}, \alpha_s) h(s, X_s^{k,\alpha}, \alpha_s) \Big) \,\dd s \nonumber \\
		&~~~~~+ \int_0^t \sigma(s,X_s^{k,\alpha}, \mu_s^{N,\alpha}, \alpha_s)\,\dd W_s^{k} + \int_0^t \sigma_0(s, X_s^{k, \alpha}, \mu_s^{N, \alpha}, \alpha_s)\,\dd U_s, \label{eq:SDE_XN_MF} \\
		L_t^{k,\alpha} &= 1 + \int_0^t  h(s, X_s^{k,\alpha}, \alpha_s) L_s^{k,\alpha} \,\dd U_s, \label{eq:SDE_LN_MF}
	\end{align}
	where
	\begin{equation} \label{eq:def_muN_MF}
		\mu_t^{N,\alpha} := \frac{\sum_{k=1}^N L_t^{k,\alpha} \delta_{X_t^{k,\alpha}}}{\sum_{j=1}^N L_t^{j,\alpha}} = \sum_{k=1}^N w_t^{k,\alpha} \delta_{X_t^{k,\alpha}},
		~~\text{ with the weight }w_t^{k,\alpha}:= \frac{L_t^{k,\alpha}}{\sum_{j=1}^N L_t^{j,\alpha}}. 
	\end{equation}
	
	A continuous-time particle approximation problem is then given by 
	\begin{equation} \label{eq:continuous-N-particle-problem_MF}
		V_0^N :=\inf_{\alpha\in \mathcal{A}_N} J^N(\alpha), 
	\end{equation}
	with
	$$
	J^N(\alpha)
	:=
	\mathbb{E}^{\mathbb{Q}^N}\left[ \int_0^T \frac{1}{N}\sum_{k=1}^N L_t^{k,\alpha} f(t, X_t^{k,\alpha}, \mu_t^{N,\alpha},\alpha_t)\,\dd t + \frac{1}{N}\sum_{k=1}^N L_T^{k,\alpha} g(X_T^{k,\alpha}, \mu_T^{N,\alpha}) \right].
	$$
	
	Unlike Proposition \ref{prop:CVG_VN}, one cannot directly apply the results in \cite{bouchard2025limit} to deduce a convergence result, 
	which we would like to study in our future work.

	\section{Numerical schemes} \label{sec:numerical-schemes}
	
	In this section, we develop numerical schemes for the $N$-particle problem~\eqref{eq:continuous-N-particle-problem_MF}, which includes~\eqref{eq:continuous-N-particle-problem} as a special case without mean-field interactions. The resulting problem is a time-consistent stochastic control problem in a high-dimensional state space of dimension $N(d+1)$. To address this challenge, we propose a machine-learning-based framework designed for high-dimensional stochastic control problems. 
	
	To highlight the main ideas, we illustrate the numerical method in the one-dimensional setting ($d=1$) in this section; the extension to general $d$ dimensions is straightforward, and corresponding numerical experiments are provided in Section~\ref{sec:numerical-experiments}. We employ deep neural networks to solve the resulting $2N$-dimensional stochastic control problem. Moreover, since the problem is invariant under permutations of the particle states $(X^1, L^1), \dots, (X^N, L^N)$, we exploit this inherent symmetry to design suitable neural network approximations. 
	
	\paragraph{Neural network architectures.}
	
	Deep learning-based methods approximate the unknown quantities of the problem, such as the value function, feedback control policies, and the $Z$ component in the solution of the associated BSDEs, using neural networks.  Let us denote the collection of single-layer neural networks with activation function $\sigma$ by 
	\begin{equation*}
		\mathcal{L}_{d_1, d_2}^\sigma := \big\{ \phi:\R^{d_1} \to \R^{d_2}\mid \phi(x) = \sigma(Wx +b), W\in \R^{d_2\times d_1}, b\in \R^{d_2} \big\}. 
	\end{equation*}
	For the identity activation $\sigma(x)=x$, we write $\mathcal{L}_{d_1, d_2}^{\mathrm{id}}$ to represent the collection of affine transformations. A multi-layer feedforward neural network (FNN) is constructed via the composition of these single layers. We define the class of FNNs as 
	\begin{equation*}
		\begin{aligned}
			\mathcal{N}_{\ell, d_0, d_h, d_{\ell+1}}^\sigma := \big\{\Phi:\R^{d_0} \to \R^{d_{\ell+1}}\mid \Phi = \phi_\ell \circ \phi_{\ell-1} \circ \dots \circ \phi_0 \text{ with } &\phi_0 \in \mathcal{L}_{d_0,d_h}^\sigma, \\
			&\phi_{i} \in \mathcal{L}_{d_h,d_h}^\sigma, i=1,\dots, \ell-1, \\
			&\phi_\ell \in \mathcal{L}_{d_h, d_{\ell+1}}^{\mathrm{id}}\big\}.
		\end{aligned}
	\end{equation*}
	Here, $\ell \in \mathbb{N}$ is the number of hidden layers, while $d_0, d_h$, and $d_{\ell+1}$ denote the dimensions of input, hidden, and output layers, respectively. Since any $\Phi \in \mathcal{N}_{\ell, d_0, d_h, d_{\ell+1}}^\sigma$ is uniquely determined by its weights and biases, we explicitly denote this dependence by writing $\Phi = \Phi(x;\theta)$ where $\theta = (W_0, b_0, \dots, W_\ell, b_{\ell})$ collects all trainable parameters across the network. 
	
	Building on FNNs, for a generic input $x = (x^1, \dots, x^N)$, the symmetric neural network architecture tailored to the $N$-particle problem is defined as 
	\begin{equation} \label{eq:symmetric-nn}
		v(x;\theta) := \Phi_2\left(\frac{1}{N}\sum_{k=1}^N \Phi_1(x^k;\theta_1);\theta_2\right), \quad \theta = (\theta_1, \theta_2), 
	\end{equation}
	where the inner network $\Phi_1\in \mathcal{N}_{\ell, d_0, d_h, d_m}^\sigma$ maps individual states to a latent space of dimension $d_m\in \mathbb{N}$, and the outer network $\Phi_2 \in \mathcal{N}_{\ell, d_m, d_h, d_{out}}^\sigma$ maps the aggregated latent representation to the final output space of dimension $d_{out}\in \mathbb{N}$. This specific symmetric architecture, widely known as DeepSets \cite{germain2022deepsets}, has also been used to approximate population distributions in \cite{dayanikli2023deep}. Because the input to the outer network $\Phi_2$ is the empirical mean of the individual feature representations $\Phi_1(x^k; \theta_1)$, the resulting function $v(x;\theta)$ is invariant under any permutation of the input sequence $(x^1, \dots, x^N)$. 
	
	\subsection{A direct approach for the \texorpdfstring{$N$}{}-particle problem} \label{subsec:direct-approach}
	
	We present a direct approach to the numerical solution of the discrete-time problem associated with~\eqref{eq:continuous-N-particle-problem_MF}, including~\eqref{eq:def_V0NDt} as the special case without mean-field interactions. Although our theoretical framework applies to the partially observable problem without mean-field interactions, we illustrate the algorithm here in the presence of the distribution term and will investigate its numerical performance in Section~\ref{sec:numerical-experiments}. The direct approach restricts the feedback control policy to a class of neural networks whose parameters are learned by stochastic optimization of the expected total cost. The direct method for stochastic control problems has been developed in \cite{han2016deep} and extended to the mean-field problems in \cite{carmona2022convergence}.
	
	The optimal control $\alpha_i$ in the discrete-time $N$-particle problem~\eqref{eq:def_V0NDt} admits a feedback form (see Remark~\ref{rem:feedback-control}) and is invariant under permutations of $(X^k, L^k)_{k=1}^N$. We therefore parametrize the control sequence $(\alpha_i)_{i=0}^{N_T-1}$ by a sequence of symmetric neural networks $(a_i(\cdot;\theta^i))_{i=0}^{N_T-1}$ with trainable parameters $\theta = (\theta^0,\dots, \theta^{N_T-1})$, such that 
	$$\alpha_i \approx \alpha_{\theta^i} := a_i\left((X_{t_i}^1, L_{t_i}^1),\dots, (X_{t_i}^N, L_{t_i}^N); \theta^i\right).$$ 
	The discrete-time control problem associated with \eqref{eq:continuous-N-particle-problem_MF} is then approximated by minimizing, over $\theta$, the expected loss function 
	\begin{equation*}
		\mathcal{L}(\theta) := \mathbb{E}\left[ \sum_{i=0}^{N_T-1} \frac{1}{N}\sum_{k=1}^N L_{t_i}^{k} f\left(t_i, X_{t_i}^{k}, \mu_{t_i}^{N}, \alpha_{\theta^i}\right) \Delta t + \frac{1}{N}\sum_{k=1}^N L_T^{k} g(X_T^{k}, \mu_T^{N}) \right], 
	\end{equation*}
	under the dynamic constraint: 
	\begin{equation} \label{eq:sample-trajectory}
		\begin{aligned}
			X_{t_0}^k = \xi_k, \quad X_{t_{i+1}}^k &= X_{t_i}^k + \left(b(t_i, X_{t_i}^k, \mu_{t_i}^N, \alpha_{\theta^i}) - \sigma_0(t_i, X_{t_i}^k, \mu_{t_i}^N, \alpha_{\theta^i})h(t_i,X_{t_i}^k,\alpha_{\theta^i}) \right) \\
			&\quad + \sigma(t_i, X_{t_i}^k, \mu_{t_i}^N, \alpha_{\theta^i}) \Delta W_{i+1}^k + \sigma_0(t_i, X_{t_i}^k, \mu_{t_i}^N, \alpha_{\theta^i})\Delta U_{i+1}, \\
			L_{t_0}^k = 1,\quad  L_{t_{i+1}}^k &= L_{t_i}^k\exp\left( -\frac{1}{2}h(t_i, X_{t_i}^k, \alpha_{\theta^i})^2\Delta t + h(t_i, X_{t_i}^k, \alpha_{\theta^i})\Delta U_{i+1} \right), 
		\end{aligned}
	\end{equation}
	where $\mu_{t_i}^N = \sum_{k=1}^N w_{t_i}^k \delta_{X_{t_i}^k}$ with $w_{t_i}^k = \frac{L_{t_i}^k}{\sum_{j=1}^N L_{t_i}^j}$, and $(\Delta W_{i}^1, \dots, \Delta W_i^N, \Delta U_i)_{i=1}^{N_T}$ are i.i.d. Gaussian with distribution $\mathcal{N}(0,\Delta t)$. 
	
	\paragraph{Training method.}
	
	Since the loss function is written as an expectation, one can rely on stochastic optimization algorithms. We use the Adam optimizer to train the control networks $(a_i(\cdot; \theta^i))_{i=0}^{N_T-1}$. The randomness inside the expectation depends on a collection of random variables 
	\begin{equation} \label{eq:randomness-S}
		S = \left((\xi_k)_{k=1,\dots, N}, (\Delta W_i^k)_{i=1, \dots, N_T, k=1,\dots, N},(\Delta U_i)_{i=1,\dots,N_T}\right).
	\end{equation}
	Hence, given a batch of $M$ realizations of $S$ and a choice of trainable parameter $\theta$, the trajectory of $(X^{k, (m)}_{t_i}, L^{k, (m)}_{t_i})_{i=0, \dots, N_T, k=1,\dots, N}$ can be sampled by \eqref{eq:sample-trajectory} for each $m \in \{1,\dots, M\}$, and the expected loss is then approximated by 
	\begin{equation}\label{eq:batch-loss}
		\begin{aligned} 
			\mathcal{L}^{M,N}(\theta) = \frac{1}{M}\sum_{m=1}^M \bigg[ \sum_{i=0}^{N_T-1} \frac{1}{N}\sum_{k=1}^N L_{t_i}^{k, (m)} f\left(t_i, X_{t_i}^{k, (m)}, \mu_{t_i}^{N, (m)}, \alpha_{\theta^i}^{(m)}\right) \Delta t \\
			+ \frac{1}{N}\sum_{k=1}^N L_T^{k, (m)} g(X_T^{k, (m)}, \mu_T^{N, (m)}) \bigg], 
		\end{aligned}
	\end{equation}
	where we denote $\alpha_{\theta^i}^{(m)} = a_i\left( (X_{t_i}^{1, (m)}, L_{t_i}^{1, (m)}), \dots, (X_{t_i}^{N, (m)}, L_{t_i}^{N, (m)}); \theta^i\right)$ and $\mu_{t_i}^{N, (m)} = \sum_{k=1}^N w_{t_i}^{k,(m)} \delta_{X_{t_i}^{k, (m)}}$ with $w_{t_i}^{k, (m)} = \frac{L_{t_i}^{k, (m)}}{\sum_{j=1}^N L_{t_i}^{j, (m)}}$. 
	
	The direct approach using Adam is described in Algorithm~\ref{alg:direct-approach}. 
	\begin{algorithm}[ht]
		\caption{Direct approach using Adam}
		\label{alg:direct-approach}
		\begin{algorithmic}[1]
			\REQUIRE Initial parameter $\theta^{(0)}$; number of epochs $K$; sequence $(\bar{\gamma}^{(k)})_{k=0,\dots, K-1}$ of learning rates; Adam hyperparameters.
			\ENSURE Optimized parameter $\theta^{(K)}$. 
			\FOR{each epoch $k$}
			\STATE Generate a batch of $M$ realizations of $S$, see \eqref{eq:randomness-S} 
			\STATE Compute sample trajectories $(X^{k, (m)}_{t_i}, L^{k, (m)}_{t_i})_{i=0, \dots, N_T, k=1,\dots, N}$ by \eqref{eq:sample-trajectory} for $m=1,\dots, M$ 
			\STATE Compute the batch loss $\mathcal{L}^{M,N}(\theta^{(k)})$ and its gradient $\nabla \mathcal{L}^{M,N}(\theta^{(k)})$, see \eqref{eq:batch-loss} 
			\STATE Set $\theta^{(k+1)}$ by one Adam update using $\nabla \mathcal{L}^{M,N}(\theta^{(k)})$ 
			\ENDFOR
			\RETURN $\theta^{(K)}$   
		\end{algorithmic}
	\end{algorithm}

	\subsection{Deep BSDE approach for the \texorpdfstring{$N$}{}-particle problem}
	When $\sigma$, $\sigma_0$, and $h$ are uncontrolled, the $N$-particle problem~\eqref{eq:continuous-N-particle-problem} (see also~\eqref{eq:continuous-N-particle-problem_MF}) admits a BSDE characterization~\eqref{eq:FBSDE}. In this section, we develop a deep BSDE approach \cite{han2018solving}, combined with symmetric neural networks~\eqref{eq:symmetric-nn}, to solve the associated FBSDE. This provides an independent method for solving the control problem under partial observations, and the resulting numerical solutions can thus be used to assess the accuracy of the direct approach. For generality, we illustrate the algorithm here in the presence of mean-field interactions. 
	
	The deep BSDE method can be interpreted as applying the direct approach to the following stochastic control problem: minimize the cost functional over $v_0:\R^{2N}\to \R$ and $z:[0,T] \times \R^{2N} \to \R^{N+1}$, given by \footnote{When $X_0$ is given as a fixed value, $V_0^{N}$ is also fixed. Thus, we set $v_0 = \theta_v$ as a trainable parameter rather than representing it with a neural network.}
	\begin{equation*}
		J_{BSDE}(v_0, z) = \mathbb{E}\left[ \bigg|V_T^{v_0, z} - \frac{1}{N}\sum_{k=1}^N L_T^k g(X_T^k, \mu^N(\mathbf{X}_T, \mathbf{L}_T)) \bigg|^2 \right], 
	\end{equation*}
	where $(V_t^{v_0, z})_{t\in [0,T]}$ acts as the controlled state satisfying 
	\begin{equation*}
		\dd V_t^{v_0, z} = -\mathcal{H}(t,\mathbf{X}_t, \mathbf{L}_t, z_t(\mathbf{X}_t, \mathbf{L}_t)) \,\dd t + z_t(\mathbf{X}_t, \mathbf{L}_t) \cdot \dd (\mathbf{W}_t, U_t), \quad V_0^{v_0, z} = v_0(\mathbf{X}_0, \mathbf{L}_0), 
	\end{equation*}
	and $(\mathbf{X}_t, \mathbf{L}_t)$ follows the uncontrolled dynamics in \eqref{eq:FBSDE}. 
	Here, $(\mathbf{W}_t, U_t)_{t\ge 0}$ is a standard $(N+1)$-dimensional Brownian motion with $\mathbf{W}_t \in \R^{N}$ and $U_t\in \R$. 
	
	Consider a uniform time grid $\mathcal{T}:=\{t_i=i\Delta t: \Delta t = \frac{T}{N_T}, i=0,1,\dots, N_T\}$. By discretizing time and parameterizing both $v_0$ and $z$ with neural networks, we obtain the deep BSDE scheme. We minimize the expected loss function over the trainable parameter $\theta = (\theta_v, \theta_z^{0},\dots,\theta_z^{N_T-1})$: 
	\begin{equation} \label{eq:loss-bsde}
		\mathcal{L}_{\text{BSDE}}(\theta) := \mathbb{E}\left[ \bigg| V_{T}^{\theta} - \frac{1}{N}\sum_{k=1}^N L_{T}^k g(X_T^k, \mu^N_T)\bigg|^2 \right], 
	\end{equation}
	subject to the following dynamic constraints: 
	\begin{equation} \label{eq:bsde-sample-trajectory}
		\begin{aligned}
			X_{t_0}^k &= \xi_k, \quad X_{t_{i+1}}^k = X_{t_i}^k + \sigma(t_i, X_{t_i}^k, \mu_{t_i}^N) \Delta W_{i+1}^k + \sigma_0(t_i, X_{t_i}^k, \mu_{t_i}^N)\Delta U_{i+1}, \\
			L_{t_0}^k &= 1, ~~\quad L_{t_{i+1}}^k = L_{t_i}^k \exp\left(-\frac{1}{2}h(t_i, X_{t_i}^k)^2 \Delta t + h(t_i, X_{t_i}^k)\Delta U_{i+1}\right), \\
			V_{t_0}^\theta &= v_0((X_{t_0}^1, L_{t_0}^1), \dots, (X_{t_0}^N, L_{t_0}^N); \theta_v), \\ 
			&\qquad \qquad  V_{t_{i+1}}^{\theta} = V_{t_i}^\theta - \mathcal{H}\left(t_i, X_{t_i}, L_{t_i}, z_{\theta^i}\right)\Delta t + z_{\theta^i} \cdot (\Delta \mathbf{W}_{i+1}, \Delta U_{i+1}), 
		\end{aligned}
	\end{equation}
	where we denote $z_{\theta^i}:= z_i\left((X_{t_i}^1, L_{t_i}^1), \dots, (X_{t_i}^N, L_{t_i}^N); \theta_z^i\right)$ using the symmetric neural network $z_i(\cdot;\theta_z^i)$ defined in \eqref{eq:symmetric-nn}, and let $\Delta \mathbf{W}_i = (\Delta W_i^1, \dotsm \Delta W_i^N)$, $\mu_{t_i}^N = \sum_{k=1}^N w_{t_i}^k \delta_{X_{t_i}^k}$ with $w_{t_i}^k = \frac{L_{t_i}^k}{\sum_{j=1}^N L_{t_i}^j}$. Here, all elements in $(\Delta W_{i}^1, \dots, \Delta W_i^N, \Delta U_i)_{i=1}^{N_T}$ are i.i.d. Gaussian with distribution $\mathcal{N}(0,\Delta t)$. 
	
	The training methodology for the deep BSDE loss~\eqref{eq:loss-bsde} closely follows the direct approach outlined in Section~\ref{subsec:direct-approach}. The randomness within the expectation of \eqref{eq:loss-bsde} stems from the collection $S$ of random variables, see \eqref{eq:randomness-S}. It is worth noting that the state trajectory of $(X_{t_i}^k, L_{t_i}^k)$ in the deep BSDE scheme differ from that in the direct approach. Specifically, given a batch of $M$ realizations of $S$ and a specific choice of parameter $\theta$, we sample $M$ trajectories $(X_{t_i}^{k, (m)}, L_{t_i}^{k,(m)}, V_{t_i}^{\theta, (m)})$ for $m=1,\dots, M$ using \eqref{eq:bsde-sample-trajectory}. The expected loss~\eqref{eq:loss-bsde} is then approximated by the empirical batch loss 
	\begin{equation} \label{eq:bsde-batch-loss}
		\mathcal{L}^{M, N}_{\text{BSDE}}(\theta) = \frac{1}{M}\sum_{m=1}^M \left[ \left| V_{T}^{\theta, (m)} - \frac{1}{N}\sum_{k=1}^N L_T^{k,(m)} g(X_T^{k, (m)}, \mu_T^{N, (m)})\right|^2 \right], 
	\end{equation}
	where $\mu_T^{N,(m)} = \sum_{k=1}^N w_T^{k, (m)}\delta_{X_T^{k,(m)}}$ with $w_T^{k,(m)} = \frac{L_T^{k,(m)}}{\sum_{j=1}^N L_T^{j,(m)}}$. 
	
	Algorithm~\ref{alg:deep-bsde} summarizes the deep BSDE approach for the $N$-particle problem~\eqref{eq:continuous-N-particle-problem_MF} and~\eqref{eq:continuous-N-particle-problem} when the diffusion $\sigma$ is uncontrolled and invertible. 
	\begin{algorithm}[ht]
		\caption{Deep BSDE approach}
		\label{alg:deep-bsde}
		\begin{algorithmic}[1]
			\REQUIRE Initial parameter $\theta^{(0)}$; number of epochs $K$; sequence $(\bar{\gamma}^{(k)})_{k=0,\dots, K-1}$ of learning rates; Adam hyperparameters.
			\ENSURE Optimized parameter $\theta^{(K)}$. 
			\FOR{each epoch $k$}
			\STATE Generate a batch of $M$ realizations of $S$, see \eqref{eq:randomness-S} 
			\STATE Compute sample trajectories $(X_{t_i}^{k, (m)}, L_{t_i}^{k,(m)}, V_{t_i}^{\theta, (m)})_{i=0, \dots, N_T, k=1,\dots, N}$ by \eqref{eq:bsde-sample-trajectory} for $m=1,\dots, M$ 
			\STATE Compute the batch loss $\mathcal{L}^{M, N}_{\text{BSDE}}(\theta^{(k)})$ and its gradient $\nabla \mathcal{L}^{M, N}_{\text{BSDE}}(\theta^{(k)})$, see \eqref{eq:bsde-batch-loss} 
			\STATE Set $\theta^{(k+1)}$ by one Adam update using $\nabla \mathcal{L}^{M, N}_{\text{BSDE}}(\theta^{(k)})$ 
			\ENDFOR
			\RETURN $\theta^{(K)}$   
		\end{algorithmic}
	\end{algorithm}

	\section{Numerical examples} \label{sec:numerical-experiments}
	
	In this section, we test the proposed deep particle methods~\ref{alg:direct-approach} and~\ref{alg:deep-bsde} on several examples. We first consider a linear--quadratic example; the availability of analytical solutions for both the optimal control and the value function allows us to comprehensively assess numerical accuracy and compare our results against the recurrent neural network (RNN)-based method proposed in \cite{han2021recurrent}. Next, we study a partially observable MFC problem, where the drift of the observation process is nonlinear. To cross-validate the results, we solve this problem numerically using both the direct approach and the deep BSDE approach. Finally, we demonstrate the practical utility of our framework in two financial applications: portfolio liquidation and allocation. 
	For the last portfolio allocation problem, the common noise volatility term $\sigma_0$ is also controlled, so that our theoretical convergence results do not apply directly.
	Nevertheless, the numerical experiments still produce reasonable results. 
	
\subsection{A linear--quadratic problem without mean-field interactions}
\label{subsec:linear_quad}
	
	The first example aims to validate the numerical accuracy against the exact analytical solutions. We consider a linear--quadratic problem adapted from \cite{li2024discrete}. Specifically, we set the coefficient functions as 
		\begin{equation*}
			b(t,x,\alpha,\mu) = \alpha, \quad \sigma = 1, \quad h(t,x) = x, \quad f(t,x,\alpha,\mu) = \alpha^2, \quad g(x,\mu) = x^2.
		\end{equation*}
		The arguments $\mu$ are dummy variables included only to match the general notation. This benchmark lies outside the bounded-coefficient and compact-control assumptions of Theorem~\ref{thm:cvg_discrete}, but its explicit solution makes it useful for numerical validation.
		By \cite[Theorem 2.4.1]{bensoussan1992stochastic}, the optimal control admits an explicit solution given by 
	\begin{equation*}
		\alpha_t^* = -\pi_t \hat{y}_t,
	\end{equation*}
	where the filtered state $\hat{y}_t$ is governed by the SDE,  
	\begin{equation*}
		d \hat{y}_t = -(\pi_t + p_t)\hat{y}_t dt + p_t dU_t, \quad \hat{y}_0 = x_0, 
	\end{equation*}
	and $(p, \pi)$ solves the forward and backward ODEs, 
	\begin{equation*}
		\begin{aligned}
			\dot{p}_t &= 1- p_t^2, \quad p_0 = 0, \\
			\dot{\pi}_t &= \pi_t^2, \quad \pi_T = 1. 
		\end{aligned}
	\end{equation*}
	Solving this system gives the final explicit form for the optimal control 
	\begin{equation} \label{eq:lq1-explicit-optimal-control}
		\alpha_t^* = -\frac{1}{\cosh(t)}\left( \frac{x_0}{1+T} + \int_0^t \frac{\sinh(s)}{1+T-s} dU_s\right).
	\end{equation}
	
	We first evaluate the direct approach described in Section~\ref{subsec:direct-approach}. For this $1$-dimensional problem, we configure the symmetric network parameters with an input dimension $d_0=2$, hidden dimension $d_h=32$, latent dimension $d_m=10$, $d_{out}=1$, and number of hidden layers $\ell=2$. The network is initialized using the Xavier uniform method and trained using the Adam optimizer with a learning rate of $\bar{\gamma} = 10^{-3}$, a batch size of $M=128$, and $K=3000$ epochs. 
	
	\paragraph{Approximation of the optimal control.}
	
	Since the observation process $U$ is a standard Brownian motion under the reference measure $\mathbb{Q}$, we simulate three sample paths of $U$ and compute the corresponding optimal controls $\alpha_t^*$ in \eqref{eq:lq1-explicit-optimal-control}. These explicit paths are compared with the discrete control processes $(a_i(\cdot;\theta^i))_{i=0}^{N_T-1}$ generated by the trained neural networks. Figure~\ref{fig:lq1-direct-symmetric-paths-N100-T05} illustrates this comparison for an initial state $x_0 = 0$ and time horizon $T=0.5$. The top panel displays the control trajectories, while the bottom panel compares the resulting optimal state processes $X_t$. 
	
		We observe that the discrepancy between the numerical and explicit solutions for $\alpha_t^*$ is relatively noticeable, whereas the difference in the optimal state process $X_t$ remains small. This can be explained by the structure of the objective functional $\mathbb{E}[\int_0^T (\alpha_t^*)^2\,\dd t + (X_T)^2]$, in which the running cost contributes only a relatively small portion of the total cost. Consequently, during training, the neural network places less emphasis on accurately approximating the control at smaller scales, making the fine-scale approximation of $\alpha_t^*$ more challenging. We increase the number of particles $N$ to $1000$ and the number of time steps $N_T$ to $200$, and report the comparison in Figure~\ref{fig:lq1-direct-symmetric-paths-N1000-NT200-T05-value}. As shown in Figure~\ref{fig:lq1-direct-symmetric-paths-N1000-NT200-T05-value}, this denser discretization significantly improves the accuracy of the control approximation, which is empirically consistent with the expected effect of refining the particle and time discretizations. 

	For the partially observed control problem, one can also design a recurrent neural network directly to represent the path-dependent control containing the full history of observable processes,  
	and then optimize the cost function over the network parameters. This approach may avoid the need to derive a time-consistent system via particle approximation. 
	To compare it with our direct approach based on the particle system, we implement the RNN-based algorithm adapted from \cite{han2021recurrent}, originally designed for stochastic control problems with path-dependent features (both methods were allocated comparable computing time).
	The numerical results are provided in Figure~\ref{fig:lq1-direct-symmetric-paths-N100-T05} and~\ref{fig:lq1-direct-symmetric-paths-N1000-NT200-T05-value}.
		One observes that the direct approach in Algorithm \ref{alg:direct-approach} with a coarser resolution ($N=100$, $N_T=100$) produces a control error of comparable magnitude to that of the RNN-based approach, while increasing the discretization to $N=1000$ particles and $N_T=200$ time steps significantly improves its accuracy.
	Notice that, in the RNN-based method, time $t$ is included as an input to the network, which results in a learned control process that is smoother than the optimal control trajectory.

	\begin{figure}[htbp!]
		\centering
		\includegraphics[width=\linewidth]{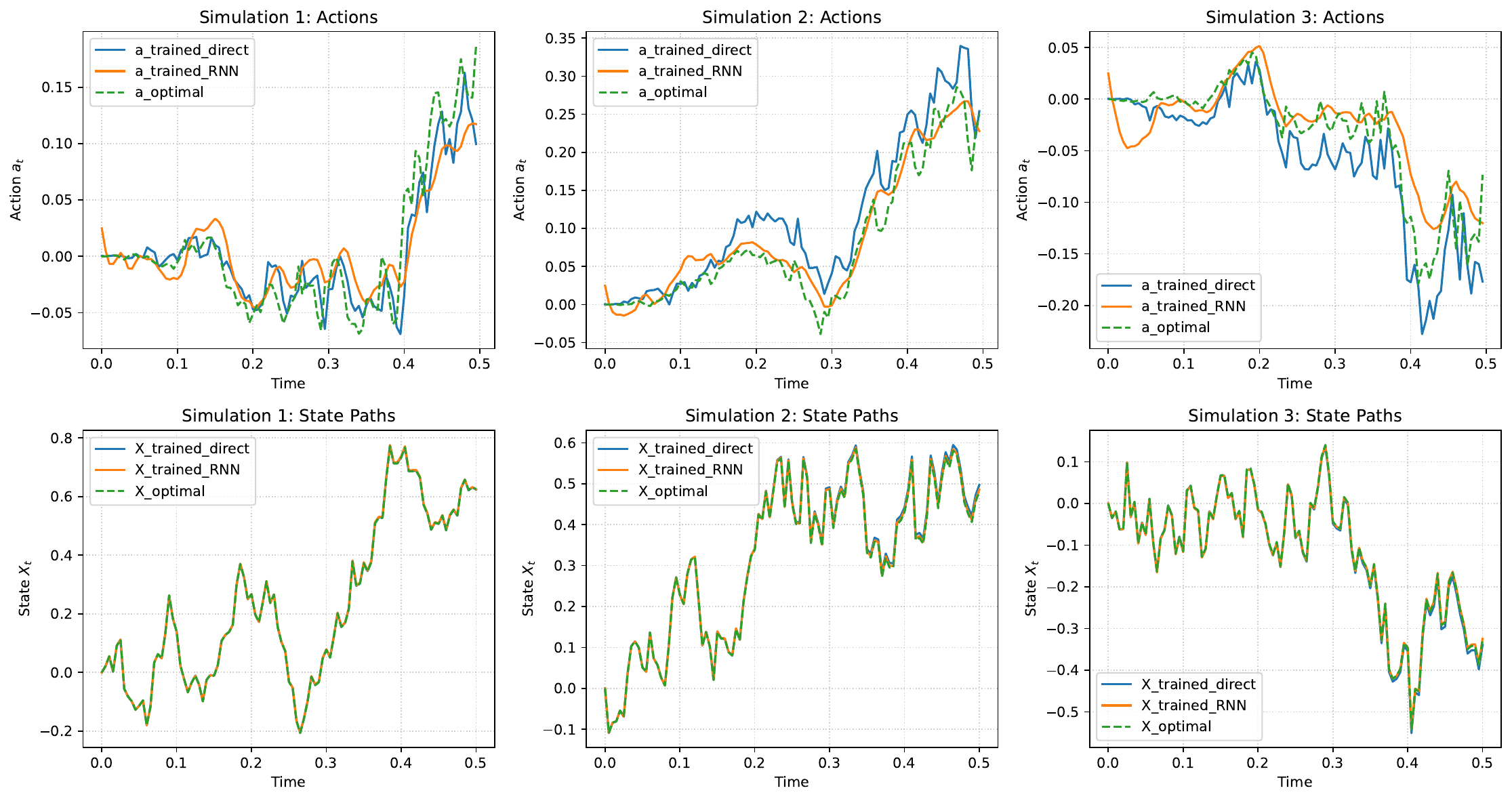}
		\caption{Comparison of direct approach solutions, RNN solutions, and explicit solutions in the simulation of optimal control $\alpha_t^*$ and the controlled optimal state $X_t$ using $N=100$ particles and $N_T=100$ time steps.}
		\label{fig:lq1-direct-symmetric-paths-N100-T05}
	\end{figure}
	
	\begin{figure}[htbp!]
		\centering
		\includegraphics[width=\linewidth]{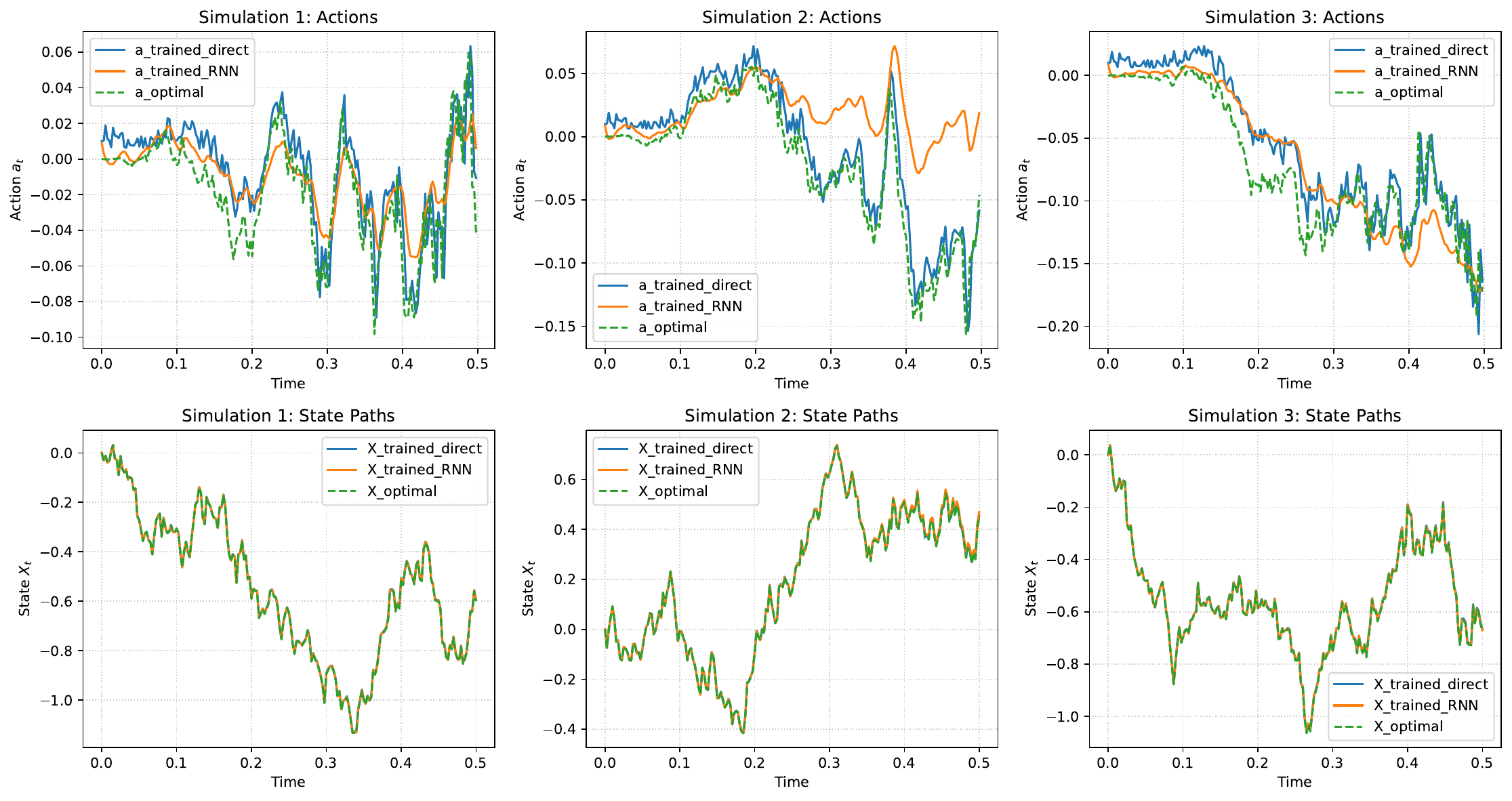}
		\caption{Comparison of direct approach solutions, RNN solutions, and explicit solutions in the simulation of optimal control $\alpha_t^*$ and the controlled optimal state $X_t$ using $N=1000$ particles and $N_T = 200$ time steps.}
		\label{fig:lq1-direct-symmetric-paths-N1000-NT200-T05-value}
	\end{figure}
	
	To quantify the approximation error of the control process, we compute the $L^2$ error on $\alpha$ by $(\frac{1}{M}\sum_{m=1}^M \sum_{i=0}^{N_T-1}|\hat{\alpha}_{t_i}^{(m)} - \alpha_{t_i}^{(m),*}|^2\Delta t)^{1/2}$, where $(\hat{\alpha}_{t_i}^{(m)})_{i=0}^{N_T-1}$ denotes the $m$-th control process computed numerically. Figure~\ref{fig:lq1_l2_error_analysis} displays the evolution of the $L^2$ error with respect to the number of training iterations, using $M=1000$ testing paths. We observe that the direct method achieves a smaller error than the RNN-based baseline under the same time discretization. 
	
	\begin{figure}[htbp!]
		\centering
		\includegraphics[width=0.5\linewidth]{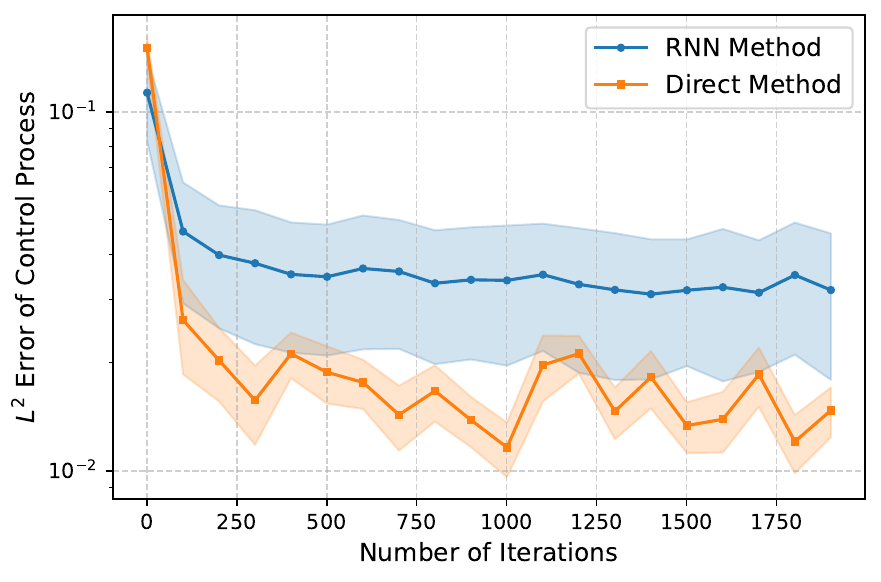}
		\caption{$L^2$ error of control process via the direct approach and the RNN-based method with $N_T = 200$ time steps.}
		\label{fig:lq1_l2_error_analysis}
	\end{figure}
	
	\paragraph{Approximation of the optimal value.}
	
	The analytical minimum total cost is given by 
	\begin{equation*}
		V_0 = \frac{x_0^2}{1+T} +\int_0^T \frac{\tanh^2(t)}{1+T-t}\,\dd t + \tanh(T). 
	\end{equation*}
	Setting $x_0=0$, $T=0.5$ and evaluating the integral term numerically, we obtain $V_0 = 0.4959$, which serves as our reference solution. 
	
	For the direct approach, the trained neural networks $(a_i(\cdot; \theta^i))_{i=0}^{N_T-1}$ provide a sequence of near-optimal feedback controls. We approximate the corresponding optimal value $V_0$ of the partially observable control problem via a Monte Carlo simulation utilizing $N_{MC}=100,000$ samples, reporting both the value estimate and standard error in Table~\ref{tab:lq1-combined-optimal-value}. 
	
		The Deep BSDE approach~\ref{alg:deep-bsde} provides an alternative algorithm. When the diffusion coefficient $\sigma$ is uncontrolled and invertible (as in the linear--quadratic case), the $N$-particle approximation is naturally associated with a FBSDE characterization. Although this problem does not satisfy the Lipschitz condition as required in Proposition~\ref{prop:bsde}, we nevertheless use the following finite-particle FBSDE as a numerical formulation:
	\begin{equation*}
		\begin{aligned}
			\dd X_t^k &= \dd W_t^k,\quad X_0^k = 0,  \\
			\dd L_t^k &= X_t^k L_t^k \,\dd U_t, \quad L_0^k = 1, \quad \text{ for }k=1,\dots,N, \\
			\dd Y_t^N &= \frac{(\sum_{k=1}^N Z_t^k)^2}{4(\frac{1}{N}\sum_{k=1}^N L_t^k)}\,\dd t + \sum_{k=1}^N Z_t^k\,\dd W_t^k + Z_t^{N+1}\,\dd U_t, \quad Y_T^N = \frac{1}{N}\sum_{k=1}^N L_T^k (X_T^k)^2. 
		\end{aligned}
	\end{equation*}
	The corresponding optimal control for the $N$-particle problem is given by 
	\begin{equation*}
		\alpha_t^{N, *} = -\frac{\sum_{k=1}^N Z_t^k}{2(\frac{1}{N}\sum_{k=1}^N L_t^k)}.
	\end{equation*}
	
		Table~\ref{tab:lq1-combined-optimal-value} summarizes the optimal value estimates for both the direct approach $\hat{V}_0$ and the deep BSDE approach across various particle sizes $N$ and time steps $N_T$. For the deep BSDE approach, we report two distinct quantities: $\bar{Y}_0^N$, which is the learned parameter approximating the initial BSDE value $Y_0^N$, and $\hat{V}_0$, which is evaluated using the near-optimal control derived from the trained networks for the $Z$ process. For fixed $N_T=200$, as $N$ increases from $10$ to $1000$, the relative error of the direct approach decreases from approximately $2.1\%$ to $0.04\%$, which is empirically consistent with convergence of the particle approximation. For fixed $N$, the direct approach shows stable errors across different $N_T$ values at larger $N$, confirming that the time discretization becomes adequate at fine resolutions. With $N=1000$ and $N_T=200$, the direct approach achieves a relative error of $0.04\%$, which is quite accurate for a deep learning-based method. The deep BSDE method shows larger errors at large $N$, which may be due to the numerical difficulty of approximating the $Z$ network with both high-dimensional input and output spaces. This suggests that the direct approach is more numerically stable for this particular problem.

	\begin{table}[htbp!]
		\centering
		\begin{tabular}{cc|cc|ccc}
			\hline
			& & \multicolumn{2}{c|}{Direct Approach} & \multicolumn{3}{c}{Deep BSDE Approach} \\
			$N$ & $N_T$ & $\hat{V}_0$ & Std. Error & $\bar{Y}_0^N$ & $\hat{V}_0$ & Std. Error \\
			\hline 
			$10$ & $50$ & 0.4858 & 0.0009 & 0.4984 & 0.4982 & 0.0010 \\
			& $100$ & 0.4875 & 0.0009 & 0.4982 & 0.4997 & 0.0010 \\
			& $200$ & 0.4857 & 0.0009 & 0.4999 & 0.5005 & 0.0010 \\
			\hline
			$100$ & $50$ & 0.4956 & 0.0004 & 0.4977 & 0.5003 & 0.0004 \\
			& $100$ & 0.4956 & 0.0004 & 0.4993 & 0.4998 & 0.0004 \\
			& $200$ & 0.4957 & 0.0004 & 0.4975 & 0.5006 & 0.0004 \\
			\hline
			$1000$ & $50$ & 0.4961 & 0.0003 & 0.4904 & 0.5074 & 0.0003 \\
			& $100$ & 0.4961 & 0.0003 & 0.4940 & 0.5098 & 0.0003 \\
			& $200$ & 0.4961 & 0.0003 & 0.4843 & 0.5114 & 0.0003  \\
			\hline
		\end{tabular}
		\caption{Comparison of optimal value estimates using the direct approach and the deep BSDE approach. The exact analytical reference value is $V_0 = 0.4959$.}
		\label{tab:lq1-combined-optimal-value}
	\end{table}

	\subsection{A partially observable mean-field control problem}
	To show that the proposed particle methods can be naturally extended to the partially observable MFC problem, we consider the following problem with partial observation 
	\begin{equation*}
		\inf_{\alpha \in \mathcal{A}^U} \mathbb{E}^{\mathbb{P}}\left[ \int_0^T \left(\Var(X_t|\mathcal{F}_t^U) + |\alpha_t|^2 \right)\dd t + \Var(X_T|\mathcal{F}_T^U) \right], 
	\end{equation*}
	where the state dynamics and the observation process are governed by 
	\begin{equation*}
		\begin{aligned}
			\dd X_t &= (X_t - \mathbb{E}^\mathbb{P}[X_t|\mathcal{F}_t^U] + \alpha_t)\,\dd t + \sigma \,\dd W_t, \quad X_0 = 0,  \\
			\dd U_t &= \sin(|X_t|^2)\,\dd t + \dd W_t^0, \quad U_0 = 0, 
		\end{aligned}
	\end{equation*}
	and we set the parameters to $T=0.3$, $\sigma = 0.4$. 
	Motivated by the setup in \cite{wan2025discrete}, we incorporate a nonlinear drift $\sin(|X_t|^2)$ in the dynamic of $U$, which removes the problem from the standard linear--quadratic case. Because this configuration lacks a known analytical solution, we shall make a comparison between different numerical schemes to validate the numerical results. By the tower property of conditional expectation, we define the running and terminal cost functions as 
	\begin{equation*}
		f(t,X_t, \mu_t, \alpha_t) = |X_t|^2 - (\mathbb{E}^\mathbb{P}[X_t|\mathcal{F}_t^U])^2 + |\alpha_t|^2, \quad g(X_T, \mu_T) = |X_T|^2 - (\mathbb{E}^{\mathbb{P}}[X_T|\mathcal{F}_T^U])^2. 
	\end{equation*}
	
		We approximate the partially observable problem using an $N$-particle system. Let $\bar{X}_t^N = \sum_{k=1}^N w_t^k X_t^k$ denote the empirical weighted mean with weights $w_t^k = \frac{L_t^k}{\sum_{j=1}^N L_t^j}$. In this mean-field control example, we use the finite-particle Hamiltonian/BSDE formulation as part of the numerical scheme, rather than as a direct application of Proposition~\ref{prop:bsde}. Since $\sigma_0 = 0$, the vector $\boldsymbol{\theta}$ reduces to
	\begin{equation*}
		\boldsymbol{\theta}(t,\mathbf{X}_t,\mathbf{L}_t,\alpha_t)
		= \begin{pmatrix}
			\sigma^{-1}(X_t^1 - \bar{X}_t^N + \alpha_t) \\
			\vdots \\
			\sigma^{-1}(X_t^N - \bar{X}_t^N + \alpha_t) \\
			0
		\end{pmatrix}.
	\end{equation*}
	The pre-minimised Hamiltonian is given by 
	\begin{equation*}
		\begin{aligned}
			H(t,\mathbf{X}_t,\mathbf{L}_t,\mathbf{Z}_t,\alpha_t)
			&= \frac{1}{N}\sum_{k=1}^N L_t^k
			\Bigl(|X_t^k|^2 - (\bar{X}_t^N)^2 + |\alpha_t|^2\Bigr)
			+ \frac{1}{\sigma}\sum_{k=1}^N Z_t^k
			\Bigl(X_t^k - \bar{X}_t^N + \alpha_t\Bigr).
		\end{aligned}
	\end{equation*}
		Here the control set is taken to be $A=\mathbb{R}$, so this example is outside the compact-control assumptions of Theorem~\ref{thm:cvg_discrete}. Since $H$ is strictly convex and quadratic in $\alpha_t \in \mathbb{R}$,
	the minimizer is obtained by setting $\partial_{\alpha} H = 0$, which yields  
	\begin{equation*}
		\frac{2}{N}\Bigl(\sum_{k=1}^N L_t^k\Bigr) \alpha_t^*
		+ \frac{1}{\sigma}\sum_{k=1}^N Z_t^k = 0
		\quad\implies\quad
		\alpha_t^* = -\frac{N}{2\sigma \sum_{k=1}^N L_t^k}\sum_{k=1}^N Z_t^k.
	\end{equation*}
	Substituting $\alpha_t^*$ back gives the minimised Hamiltonian 
	\begin{equation*}
		\begin{aligned}
			\mathcal{H}(t,\mathbf{X}_t,\mathbf{L}_t,\mathbf{Z}_t)
			&= \frac{1}{N}\sum_{k=1}^N L_t^k
			\Bigl(|X_t^k|^2 - (\bar{X}_t^N)^2\Bigr)
			+ \frac{1}{\sigma}\sum_{k=1}^N Z_t^k
			\Bigl(X_t^k - \bar{X}_t^N\Bigr)
			- \frac{N}{4\sigma^2 \sum_{k=1}^N L_t^k}
			\Bigl(\sum_{k=1}^N Z_t^k\Bigr)^2.
		\end{aligned}
	\end{equation*}
	Thus, the uncontrolled $N$-particle forward system reads, for $k = 1, \dots, N$, 
	\begin{equation} \label{eq:ex2-fbsde-forward}
		\begin{aligned}
			\dd X_t^k &= \sigma \,\dd W_t^k, \quad X_0^k = 0, \\
			\dd L_t^k &= \sin(|X_t^k|^2)L_t^k\,\dd U_t, \quad L_0^k = 1, 
		\end{aligned}
	\end{equation}
	where $W^1, \dots, W^N$ and $U$ are independent Brownian motions. The value process $Y^N$ satisfies the BSDE 
	\begin{equation} \label{eq:ex2-fbsde-backward}
		\dd Y_t^N = -\mathcal{H}(t, \mathbf{X}_t,\mathbf{L}_t,\mathbf{Z}_t)\,\dd t + \sum_{k=1}^N Z_t^k \,\dd W_t^k + Z_t^{N+1}\,\dd U_t,
	\end{equation}
	with the terminal condition 
	\begin{equation*}
		Y_T^N = \frac{1}{N}\sum_{k=1}^N L_T^k
		\left(|X_T^k|^2 - (\bar{X}_T^N)^2\right).
	\end{equation*}
	
		For numerical stability, we use the following modified FBSDE system:
	\begin{equation} \label{eq:ex2-fbsde-numerics}
		\begin{aligned}
			\dd X_t^k &= (X_t^k - \bar{X}_t^N)\,\dd t + \sigma \,\dd W_t^k, \quad X_0^k = 0, \\
			\dd L_t^k &= \sin(|X_t^k|^2)L_t^k\,\dd U_t, \quad L_0^k=1, \\
			\dd Y_t^N &= -\overline{f}(t, \mathbf{X}_t, \mathbf{L}_t, \mathbf{Z}_t)\,\dd t + \sum_{k=1}^N Z_t^k \,\dd W_t^k + Z_t^{N+1}\,\dd U_t,
		\end{aligned}
	\end{equation}
	with the terminal condition $Y_T^N = \frac{1}{N}\sum_{k=1}^N L_T^k \left(|X_T^k|^2 - (\bar{X}_T^N)^2\right)$, where the modified generator is 
	\begin{equation*}
		\overline{f}(t, \mathbf{X}_t, \mathbf{L}_t, \mathbf{Z}_t) = \frac{1}{N}\sum_{k=1}^N L_t^k
		\Bigl(|X_t^k|^2 - (\bar{X}_t^N)^2\Bigr)
		- \frac{N}{4\sigma^2 \sum_{k=1}^N L_t^k}
		\Bigl(\sum_{k=1}^N Z_t^k\Bigr)^2. 
	\end{equation*}
	
		We solve the FBSDE \eqref{eq:ex2-fbsde-numerics} instead of \eqref{eq:ex2-fbsde-forward}-\eqref{eq:ex2-fbsde-backward} to improve numerical stability. This modification is crucial for mitigating distributional shift, which is common when solving stochastic control problems via deep BSDE methods. If the neural networks are trained using the purely noise-driven forward system \eqref{eq:ex2-fbsde-forward}, the simulated state trajectories remain tightly concentrated near the origin. However, under the physical dynamics, due to the individual intrinsic drift $X_t^k - \bar{X}_t^N$, the state trajectories expand and occupy a significantly wider spatial domain over time. Evaluating a network trained strictly on the narrow distribution of \eqref{eq:ex2-fbsde-forward} against these expanded physical paths will force the model to extrapolate out-of-distribution. Therefore, for the numerical implementation, we incorporate the intrinsic uncontrolled part $X_t^k - \bar{X}_t^N$ into the forward dynamics. The alignment of training and evaluation distributions improves network convergence and leads to a tight agreement between the learned initial value and the subsequent Monte Carlo estimate using near-optimal controls, see Table~\ref{tab:lq2-comparison} below. Here, we configure the symmetric network parameters with an input dimension $d_0=2$, hidden dimension $d_h=32$, latent dimension $d_m=10$, $d_{out}=1$, and number of hidden layers $\ell=2$. The network is initialized using the Xavier uniform method and trained using the Adam optimizer with a learning rate of $\bar{\gamma} = 10^{-3}$, a batch size of $M=64$, and $K=1500$ epochs. 
	
	To verify the numerical accuracy in the absence of an explicit analytical solution, we compare the proposed direct method against the deep BSDE approach. Both methods utilize symmetric neural networks, but they differ fundamentally in their approximation quantities and loss formulations. The direct approach approximates the control processes and minimizes cost functional directly, while the deep BSDE approach treats the initial value $V_0^N$ and the $Z$ process in the BSDE as learnable parameters to match the target terminal condition. The close agreement between these two independent algorithms serves as strong evidence of numerical accuracy. 
	
	For the deep BSDE approach, we use symmetric networks with $d_{\mathrm{out}}=N$ to represent $(Z_{t_i}^1,\dots,Z_{t_i}^N)$, together with additional symmetric networks with $d_{\mathrm{out}}=1$ for $Z_{t_i}^{N+1}$. All other hyperparameters are chosen to be the same as those in the direct approach to ensure a fair comparison. 
	
	Table~\ref{tab:lq2-comparison} summarizes the numerical results across various particle sizes $N$ and time steps $N_T$. For the deep BSDE approach, we report two distinct quantities: $\bar{Y}_0^N$, which is the learned parameter approximating the initial BSDE value $Y_0^N$, and $\hat{V}_0$, which is evaluated using the near-optimal control derived from the trained networks for the $Z$ process. The standard errors for both approaches are computed via Monte Carlo simulations, utilizing these learned controls over 100,000 sample paths. In the absence of an exact analytical benchmark, the close agreement between the direct approach and the deep BSDE method serves to mutually validate the numerical integrity of our solutions. 
	
	\begin{table}[htbp!]
		\centering
		\begin{tabular}{cc|cc|ccc}
			\hline
			& & \multicolumn{2}{c|}{Direct Approach} & \multicolumn{3}{c}{Deep BSDE Approach} \\
			$N$ & $N_T$ & $\hat{V}_0$ & Std. Error & $\bar{Y}_0^N$ & $\hat{V}_0$ & Std. Error \\
			\hline
			$10$ & $50$ & 0.0664 & 0.00010  & 0.0676 & 0.0665 & 0.00010 \\
			& $100$ & 0.0668 & 0.00010  & 0.0676 & 0.0670 & 0.00010 \\
			& $200$ & 0.0671 & 0.00010  & 0.0665 & 0.0672 & 0.00010 \\
			\hline
			$100$ & $50$ & 0.0732 & 0.00003  & 0.0735 & 0.0732 & 0.00003 \\
			& $100$ & 0.0736 & 0.00003  & 0.0731 & 0.0736 & 0.00003 \\
			& $200$ & 0.0738 & 0.00003  & 0.0742 & 0.0737 & 0.00003 \\
			\hline
			$1000$ & $50$ & 0.0739 & 0.00001 & 0.0739 & 0.0739 & 0.00001 \\
			& $100$ & 0.0742 & 0.00001 & 0.0742 & 0.0742 & 0.00001 \\
			& $200$ & 0.0744 & 0.00001 & 0.0743 & 0.0744 & 0.00001 \\
			\hline
		\end{tabular}
		\caption{Comparison of optimal value estimates between the direct approach and the deep BSDE approach.}
		\label{tab:lq2-comparison}
	\end{table}
	
	Table~\ref{tab:lq2-comparison} demonstrates several observations. First, for a fixed particle size $N$, the value estimates stabilize as we refine the time discretization, indicating convergence with respect to the time step. Second, for large $N = 1000$, the two methods (direct and deep BSDE) produce nearly identical results, with relative differences less than $0.7\%$, providing strong mutual validation of accuracy. Third, the estimates improve and stabilize as we increase from $N=10$ to $N=1000$. This agreement between two fundamentally different algorithms verifies that our numerical solutions are reliable, despite the absence of an analytical benchmark for this nonlinear partially observable mean-field control problem.

	\subsection{Portfolio optimization under drift uncertainty} \label{subsec:portfolio}
		We consider portfolio optimization problems under drift uncertainty adapted from \cite{balata2019class}, which can be viewed as partially observable stochastic control problems. In standard portfolio optimization, an investor typically assumes full knowledge of a stock's expected return (drift) and volatility. In practice, while volatility can be reliably estimated from high-frequency historical data, the expected return remains difficult to observe since it changes dynamically in response to business cycles and other macroeconomic factors. In the following we consider two types of optimization problems in finance. Both examples demonstrate the practical applicability of our numerical methods, but they also go beyond the literal scope of Theorem~\ref{thm:cvg_discrete}: the liquidation model involves unbounded state and control features, while the allocation problem has controlled common-noise volatility. They should therefore be viewed as numerical demonstrations rather than theoretically guaranteed applications of the convergence theorem.
	
	\paragraph{Portfolio liquidation problem.}
	
	We consider the problem of an investor who decides to liquidate the portfolio, i.e., sell the asset (considering one stock) and end up holding mostly cash by time $T$. The investor controls the trading rate (or velocity), denoted by $\alpha_t$, at which he/she buys ($\alpha_t>0$) or sells $(\alpha_t<0)$ the stock. Consequently, the inventory $q_t$, representing the number of shares held at time $t$, evolves according to 
	\begin{equation*}
		\dd q_t = \alpha_t \,\dd t, 
	\end{equation*}
	subject to the initial inventory $q_0$. The investor can make decisions by observing the stock price governed by 
	\begin{equation*}
		\dd S_t = \beta_t S_t \,\dd t + \sigma_S S_t\,\dd B_t, \quad S_0 = s_0.
	\end{equation*}
	Here, the drift process $\beta$ is partially observable, which can not be observed directly but is assumed to be a mean-reverting stochastic process driven by 
	\begin{equation*}
		\dd \beta_t = \kappa (\bar{\beta} - \beta_t)\,\dd t + \sigma_\beta \dd W_t, \quad \beta_0 = b_0,  
	\end{equation*}
	where $W$ and $B$ are independent Brownian motions. The trader's objective is to minimize the expected total liquidation cost, formulated as 
	\begin{equation*}
		J(\alpha) = \mathbb{E}^{\mathbb{P}}\left[ \int_0^T \alpha_t(S_t + \gamma \alpha_t) \,\dd t + \eta q_T^2 \right], 
	\end{equation*}
	where $\gamma \alpha_t$ captures the price impact at time $t$ and $\eta q_T^2$ performs as a penalization term ensuring the liquidation is completed by time $T$. 
	
	To fit in our framework, we introduce the log-price observation process $U_t = \frac{\ln S_t}{\sigma_S}$ for $t>0$. By It\^{o}'s lemma, $U$ follows 
	\begin{equation*}
		\dd U_t = h(t, \beta_t)\,\dd t + \dd B_t, 
	\end{equation*}
	where $h(t, \beta_t) = \frac{\beta_t}{\sigma_S} - \frac{\sigma_S}{2}$. 
	Let $X_t = (\beta_t, q_t, U_t)^\top$ be the hidden state process given by 
	\begin{equation*}
		\dd\begin{pmatrix}
			\beta_t \\
			q_t \\
			U_t
		\end{pmatrix} = \begin{pmatrix}
			\kappa(\bar{\beta} - \beta_t) \\
			\alpha_t \\
			h(t,\beta_t)
		\end{pmatrix} \,\dd t + \begin{pmatrix}
			\sigma_\beta \\
			0 \\
			0
		\end{pmatrix}\,\dd W_t + \begin{pmatrix}
			0 \\
			0 \\
			1
		\end{pmatrix} \,\dd B_t
	\end{equation*}
	Under this transformation, the trader's optimization problem can be equivalently rewritten as 
	\begin{equation} \label{eq:ex3-liquidation-objective}
		\inf_{\alpha \in \mathcal{A}^U} \mathbb{E}^{\mathbb{P}}\left[ \int_0^T \left(\alpha_t \exp(\sigma_S U_t) + \gamma \alpha_t^2 \right) \,\dd t + \eta q_T^2 \right], 
	\end{equation}
	where $\mathcal{A}^U$ denotes the set of admissible trading strategies adapted to the observation filtration generated by $U$. 
	
	We solve the partial observation control problem \eqref{eq:ex3-liquidation-objective} using the direct approach for its $N$-particle approximation. For numerical experiments, we configure the symmetric network parameters with an input dimension $d_0=4$ (for $\beta$, $q$, $U$, and $L$), hidden dimension $d_h=32$, latent dimension $d_m=10$, $d_{out}=1$, and number of hidden layers $\ell=2$. The network is initialized using the Xavier uniform method and trained using the Adam optimizer with a learning rate of $\bar{\gamma} = 3\times 10^{-5}$, a batch size of $M=64$, and $K=3000$ epochs. Set $T=1.5$, $\kappa = 0.03$, $\bar{\beta}=0.1$, $\sigma_S = 0.4$, $s_0=6$, $\beta_0=0.03$, $q_0 = 1$, $\gamma = 5$, and $\eta=100$. We discretize the problem with $N=1000$ particles and $N_T=100$ time steps. 
	
	To investigate the sensitivity of the optimal strategy to underlying market dynamics, we train and evaluate the model across a range of values for $\sigma_\beta$, the volatility of the unobserved drift. As illustrated in Figure~\ref{fig:ex3_1_comparison_po_fo_expected_cost_vs_sigma_beta}, the optimal expected execution cost decreases monotonically as $\sigma_\beta$ increases. Intuitively, a higher volatility in the unobserved drift implies larger transient trends in the asset price. The trained strategy can take advantage of these fluctuations and opportunistically change the trading rate, thereby exploiting favorable price movements to minimize the overall execution cost. 
	
	In this problem, we assume the drift $\beta$ follows an unobserved mean-reverting process. To demonstrate the impact of the partial observation feature, Figure~\ref{fig:ex3_1_comparison_po_fo_expected_cost_vs_sigma_beta} also compares the optimal value of the partially observable problem with a fully observable counterpart, where the investor can observe the drift process $\beta$ for decision-making. We can see that with more information to the investor, the expected optimal cost is always smaller. 
	
	\begin{figure}[htbp!]
		\centering
		\includegraphics[width=0.5\linewidth]{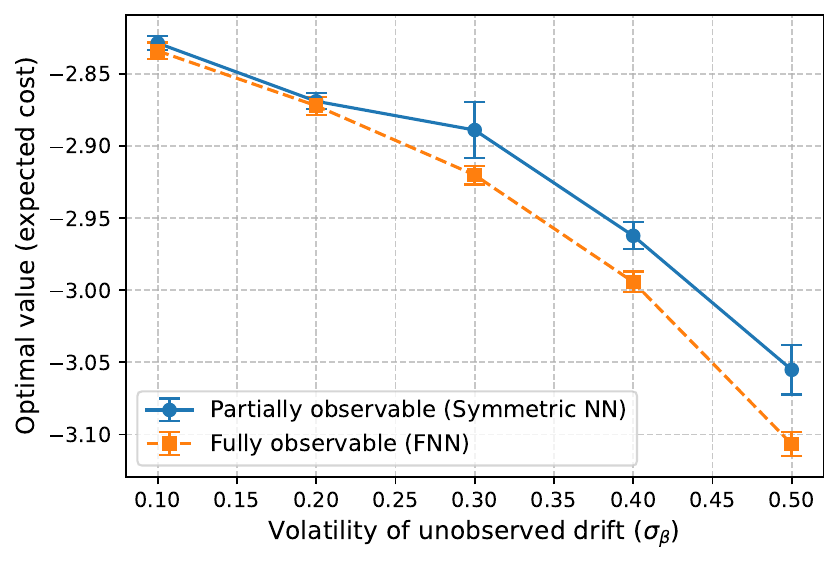}
		\caption{The optimal expected cost against the volatility of unobserved drift for the liquidation problem.}
		\label{fig:ex3_1_comparison_po_fo_expected_cost_vs_sigma_beta}
	\end{figure}
	
	Figure~\ref{fig:ex3_1_qt_comparison} visualizes the inventory trajectories $q_t$ under three different volatility settings: $\sigma_\beta = 0.1$, $0.5$, and $1.0$. The inventory paths closely track a trivial strategy, which consists of liquidating the inventory at a constant, uniform rate $\alpha_t = -q_0/T$. Across various $\sigma_\beta$ configurations, the trained optimal strategy consistently achieves a lower expected liquidation cost than the trivial strategy. 
	
	\begin{figure}[htbp!]
		\centering
		\includegraphics[width=0.5\linewidth]{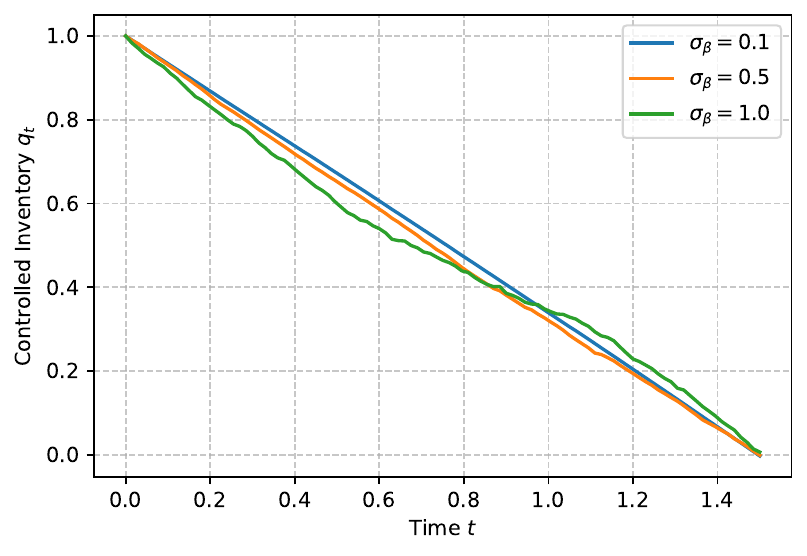}
		\caption{The comparison of the inventory $q_t$ between $\sigma_\beta=0.1, 0.5, 1.0$.}
		\label{fig:ex3_1_qt_comparison}
	\end{figure}
	
	\paragraph{Portfolio allocation problem.} 
	
	Consider a portfolio consisting of a stock with the price $S_t$ and a risk-free asset yielding a constant interest rate $r$. An investor with initial wealth $w_0$ dynamically allocates $\alpha_t$ dollars to the risky asset ($\alpha_t<0$ corresponds to taking a short position in the stock), keeping the residual wealth in the risk-free account. Consequently, the self-financing wealth process $w_t$ follows the dynamic 
	\begin{equation*}
		\begin{aligned}
			\dd w_t &= r(w_t-\alpha_t)\,\dd t + \alpha_t(\beta_t \,\dd t + \sigma_S \,\dd B_t) \\
			&= \left( rw_t + \alpha_t (\beta_t - r) \right)\,\dd t + \sigma_S \alpha_t \,\dd B_t. 
		\end{aligned}
	\end{equation*}
	The investor's objective is to maximize the expected utility of the terminal wealth $w_T$:  
	\begin{equation*}
		\max_{\alpha} \mathbb{E}^{\mathbb{P}}[u(w_T)], 
	\end{equation*}
	given only the information generated by observing the stock price. Here we use a CARA utility function $u$ as $u(w) = -\exp(-pw), \quad p>0$.
	
	To cast this into our partial observation framework, we define the observation process as $U_t = \frac{\ln S_t}{\sigma_S}$, which satisfies 
	\begin{equation*}
		\dd U_t = h(t,\beta_t)\,\dd t + \dd B_t. 
	\end{equation*}
	Let $X_t = (\beta_t, w_t)$ be the hidden state process given by 
	\begin{equation*}
		\dd\begin{pmatrix}
			\beta_t \\
			w_t
		\end{pmatrix} = \begin{pmatrix}
			\kappa(\bar{\beta} - \beta_t) \\
			rw_t + \alpha_t (\beta_t - r)
		\end{pmatrix} \,\dd t + \begin{pmatrix}
			\sigma_\beta \\
			0
		\end{pmatrix}\,\dd W_t + \begin{pmatrix}
			0 \\
			\sigma_S \alpha_t
		\end{pmatrix} \,\dd B_t. 
	\end{equation*}
	
		Although this problem falls outside our theoretical framework since the volatility $\sigma_0$ is controlled and the allocation and wealth variables are unbounded, we nevertheless apply our proposed particle method 
		with the direct deep learning approach described in Algorithm \ref{alg:direct-approach}. We configure the symmetric network parameters with an input dimension $d_0=3$ (for $\beta, w$, and $L$), hidden dimension $d_h=32$, latent dimension $d_m=10$, $d_{out}=1$, and number of hidden layers $\ell=2$. The network is initialized using the Xavier uniform method and trained using the Adam optimizer with a learning rate of $\bar{\gamma} = 10^{-3}$, a batch size of $M=128$, and $K=1000$ epochs. For the problem coefficients, we set $T=1$, $p=1$, $\kappa = 0.5$, $\bar{\beta}=0.1$, $\sigma_\beta = 0.6$, $\sigma_S = 0.4$, $s_0=6$, $\beta_0=0.08$, $r=0.03$, and $w_0 = 0$. When $w_0=0$, the investor can borrow cash at the risk-free rate $r$ to finance the purchase of the stock. Figure~\ref{fig:ex3_2_direct_symmetric_nn_paths} illustrates $10$ simulated paths of the trained optimal control process and the corresponding wealth process under the physical measure $\mathbb{P}$. These paths provide a qualitative illustration of the learned strategy and wealth dynamics, rather than a separate verification of optimality. 
	
	\begin{figure}[htbp!]
		\centering
		\includegraphics[width=\linewidth]{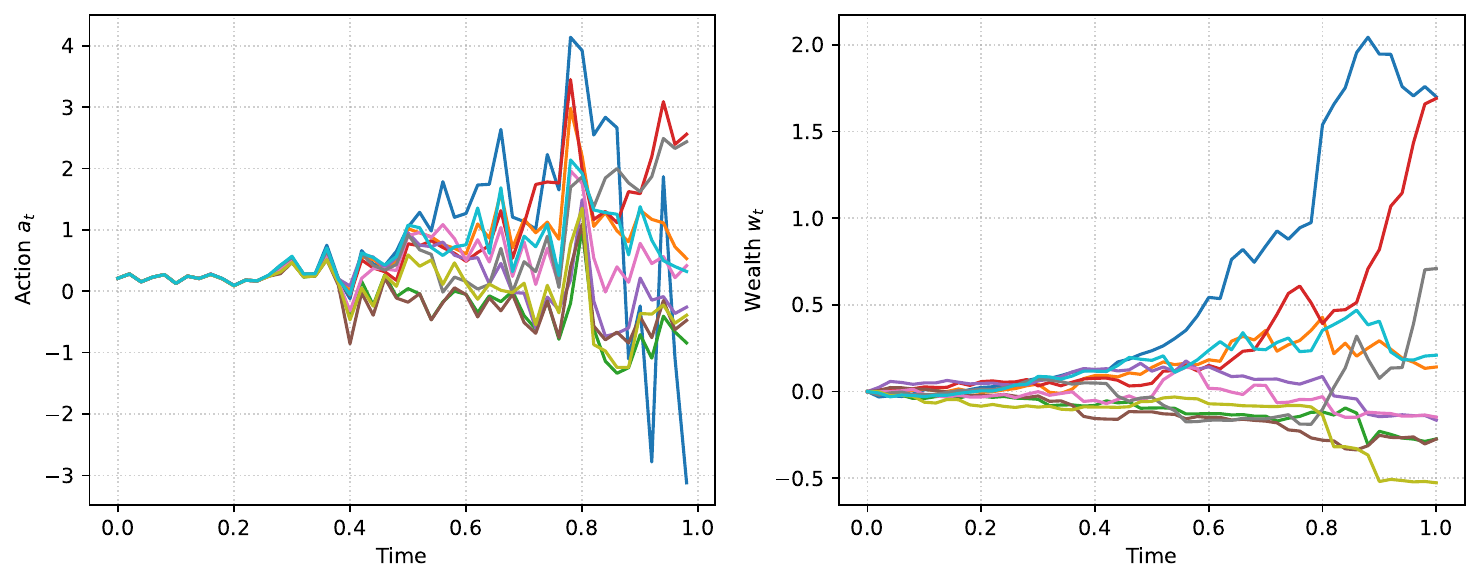}
		\caption{Simulations of the trained optimal control $\alpha_t$ and the controlled wealth process $w_t$.}
		\label{fig:ex3_2_direct_symmetric_nn_paths}
	\end{figure}
	
	\section{Conclusion}\label{sec:ccl}
		This paper addresses the challenge of numerically solving stochastic control problems under partial observation by proposing particle approximations combined with deep learning. Under the assumptions of Theorem~\ref{thm:cvg_discrete}, we establish convergence of the fully discretized $N$-particle system with time step $\Delta t$ to the original continuous-time partially observable control problem as $N\to \infty$ and $\Delta t \to 0$. The particle approximation naturally extends to partially observable mean-field control problems, a class that has been studied theoretically but remains seldom addressed numerically. Comprehensive experiments on a linear--quadratic benchmark with analytical solutions, a nonlinear mean field control problem, and financial applications demonstrate the practical utility and accuracy of the proposed approach. 
	
		We now mention some limitations and possible future directions. The current convergence theory assumes compact controls, bounded coefficients, suitable continuity of the costs, and uncontrolled volatility coefficient $\sigma_0$ and observation drift $h$. These assumptions exclude some of the numerical examples in their present form, especially the financial applications in Section~\ref{subsec:portfolio}. While our numerical experiments show that the methods work heuristically in such cases, extending the theoretical framework to cover these settings is an important direction. Additionally, although we present a heuristic extension to mean-field control problems, the rigorous convergence analysis for mean-field control problems remains open and is left for future investigation. 
		
	\section*{Acknowledgements}
	This work was supported in part through the NYU Shanghai IT High Performance Computing resources, services, and staff expertise. 
	
	\bibliographystyle{siam} 
	\bibliography{references}
\end{document}